\setlist*{label=\alph*), ref=\alph*, wide, labelindent =0em, labelwidth=!}
\newcommand*\patchAmsMathEnvironmentForLineno[1]{%
  \expandafter\let\csname old#1\expandafter\endcsname\csname #1\endcsname
  \expandafter\let\csname oldend#1\expandafter\endcsname\csname end#1\endcsname
  \renewenvironment{#1}%
     {\linenomath\csname old#1\endcsname}%
     {\csname oldend#1\endcsname\endlinenomath}}%
\newcommand*\patchBothAmsMathEnvironmentsForLineno[1]{%
  \patchAmsMathEnvironmentForLineno{#1}%
  \patchAmsMathEnvironmentForLineno{#1*}}%
\newcommand{\supp}{\mathop{\mathrm{supp}}\nolimits}
\DeclareFontFamily{U}{tipa}{}
\DeclareFontShape{U}{tipa}{m}{n}{<->tipa10}{}
\newcommand{\arc@char}{{\usefont{U}{tipa}{m}{n}\symbol{62}}}%
\newcommand{\arc}[1]{\mathpalette\arc@arc{#1}}
\newcommand{\arc@arc}[2]{%
  \sbox0{$\m@th#1#2$}%
  \vbox{
    \hbox{\resizebox{\wd0}{\height}{\arc@char}}
    \nointerlineskip
    \box0
  }%
}
\DeclareMathOperator{\conv}{conv}
\DeclareMathOperator{\cl}{cl}
\DeclareMathOperator{\aff}{aff}
\DeclareMathOperator{\rint}{rint}
\DeclareMathOperator{\intr}{int}
\DeclareMathOperator{\pr}{Pr}
\newtheorem*{def*}{Definition}
\newtheorem*{rem*}{Remark}
\newtheorem*{cor*}{Corollary}
\newtheorem{cor}{Corollary}
\newtheorem{prop}{Proposition}
\newtheorem{lem}{Lemma}
\newtheorem*{lem1'}{Lemma $\mathbf{1^\prime}$}
\newtheorem{theorem}{Theorem}
\newtheorem{lemma}{Lemma}
\theoremstyle{definition}
\newtheorem{definition}{Definition}
\theoremstyle{remark}
\newtheorem{remark}{Remark}
\newtheorem{example}{Example}
\def\R{\mathbb{R}}
\def\N{\mathbb{N}}
\def\E{\mathbb{E}}
\def\P{\mathbb{P}}
\def\S{\mathbb{S}}
\def\I{\mathbbm{1}}
\newcommand{\bt}{\begin{theo}}
\newcommand{\et}{\end{theo}}
\newcommand{\bl}{\begin{lem}}
\newcommand{\el}{\end{lem}}
\newcommand{\bc}{\begin{cor*}}
\newcommand{\ec}{\end{cor*}}
\newcommand{\br}{\begin{rem*}}
\newcommand{\er}{\end{rem*}}
\newcommand{\bp}{\begin{proof}}
\newcommand{\ep}{\end{proof}}
\newcommand{\bes}{\begin{ex}}
\newcommand{\ees}{\end{ex}}
\begin{document}

\title[{When is the rate function of a random vector strictly convex?}]{When is the rate function \\of a random vector strictly convex?}

\author{Vladislav~Vysotsky}
\address{Vladislav~Vysotsky, University of Sussex}
\email{v.vysotskiy@sussex.ac.uk}

\begin{abstract}
We give a necessary and sufficient condition for strict convexity of  the rate function of a random vector  in $\R^d$. This condition is always satisfied when the random vector has finite Laplace transform. We also completely describe the effective domain of the rate function under a weaker condition.
\end{abstract}

\subjclass[2010]{Primary: 60E10, 26B25; secondary: 60F10}
\keywords{Rate function, convex conjugate, Legendre--Fenchel transform,  strictly convex, strict convexity, effective domain,  steep, essentially smooth, essentially strictly convex}

\maketitle

\section{Introduction}

Let $X$ be a random vector  in $\R^d$ and $I_X$ be its {\it rate function}, given by
\[
I_X(v):= \sup_{u \in \R^d} \bigl( u \cdot v - \log \E e^{u \cdot X} \bigr), \qquad v \in \R^d,
\] 
where `$\cdot $' stands for the scalar product in $\R^d$. This function is the {\it convex conjugate} of the {\it logarithmic Laplace transform} of $X$, defined by $K_X(u):=\log \E e^{u \cdot X}$ for every $u \in \R^d$.

The function $K_X$ takes values in $(-\infty, +\infty]$, satisfies $K(0)=0$, and is convex by H\"older's inequality.  Then $I_X$ is also convex and finite at least at one point (\cite[Theorem~12.2]{Rockafellar}), and it takes values in $[0, +\infty]$. The {\it effective domain} $\mathcal D(I_X)$ of $I_X$, defined~by 
\[
\mathcal D(I_X):=\{v\in \R^d: I_X(v)<+\infty\},
\] 
is convex and non-empty, and so is the effective domain $\mathcal D(K_X)$ of $K_X$.


It is easy to show that $K_X$ is differentiable at every point of $\intr \mathcal D(K_X)$ (\cite[\mbox{Corollary}~7.1]{B-N}). When the set $\intr \mathcal D(K_X)$ is non-empty, we say that $K_X$ is {\it steep} (at the boundary of its effective domain) if $\lim_{n\to \infty}|\nabla K_X(u_n)| = \infty$ for every sequence $u_1, u_2, \ldots $ in $\intr \mathcal D(K_X)$ converging to a point in  $\partial \mathcal D(K_X)$. 
Note that $K_X$ is steep  when it is finite at every point.

The property of steepness appears in a number of general convex-analytic results concerning the so-called {\it essentially smooth} convex functions on $\R^d$ (\cite[Section~26]{Rockafellar}). In the  context of large deviations probabilities this property features in the important G\"artner--Ellis theorem (\cite[Section~2.3]{DemboZeitouni}). The assumption $0 \in \intr \mathcal D(K_X)$, which is of course stronger than $\intr \mathcal D(K_X) \neq \varnothing$, is crucial for classical Cram\'er's theorem  (\cite[Section~2.4]{FirasTimo}), where the rate function $I_X$ plays the key role. 

Let us recall a few more definitions. For any $A \subset \R^d$, denote by $\conv A$ (resp.\ $\aff A$) the {\it convex hull} (resp.\ {\it affine hull}) of $A$, i.e.\ the minimal convex (resp. affine) subset of $\R^d$ containing $A$; denote by $\rint A$ is the {\it relative interior} of $A$, i.e.\ the interior of $A$ in the relative topology of $\aff A$; and let $\partial_{\text{rel}} A:= \cl A \setminus \rint A$ be the {\it relative boundary} of $A$. Note that $\rint A = A$ if $A$ consists of a single point. 

The {\it topological support} of a random vector $X$ in $\R^d$, denoted by  $\supp X$, is the minimal by inclusion closed set $S \subset \R^d$ such that $\P(X \in S)=1$. The {\it convex support} of~$X$ is
\[
C_X:=\conv( \supp X ).
\]

A hyperplane $L \subset \R^d$ {\it supports} a convex set $C \subset \R^d$ if $L$ intersects $C$ and $C$ is contained in either of the two half-spaces of $\R^d$ that have $L$ as their boundary ($C \subset L$ is possible).

We say that $I_X$ is  {\it strictly convex on a set} $A \subset \mathcal D(I_X)$ if $I_X$ is affine on no line segment contained in $A$, and $I_X$ is {\it strictly convex} if it is strictly convex on $\mathcal D(I_X)$.

Our starting point is the following assertion.

\begin{prop} \label{prop: known}
Let $X$ be a random vector in $\R^d$, $d \ge 1$. 
\begin{enumerate}
\item \label{item: D_I inclusions} 
We have
\begin{equation} \label{eq: D_I inclusions}
\rint C_X \subset \mathcal{D} (I_X)  \subset  \cl C_X, 
\end{equation}
hence $\rint \mathcal{D} (I_X) = \rint C_X$. Moreover, $I_X=+\infty$ on
every hyperplane $L$ in $\R^d$ supporting $\cl C_X$ and such that $\P(X \in L)=0$.
\item \label{item: I strictly convex}
$I_X$ is strictly convex on $\rint \mathcal{D} (I_X)$ if and only if $\intr \mathcal D(K_X) \neq \varnothing$ and $K_X$ is steep.
\end{enumerate}
\end{prop}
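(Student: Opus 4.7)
For part (a), I would handle the three assertions separately. The inclusion $\mathcal{D}(I_X) \subset \cl C_X$ is proved by separation: if $v \notin \cl C_X$, choose $u$ with $u \cdot v > c := \sup_{x \in C_X} u \cdot x$; then $u \cdot X \le c$ a.s., hence $K_X(tu) \le tc$ and $I_X(v) \ge t(u \cdot v - c) \to +\infty$. The opposite inclusion $\rint C_X \subset \mathcal{D}(I_X)$ is classical (a Carath\'eodory/Bahadur--Ranga-Rao argument: write $v \in \rint C_X$ as a strict convex combination of points of $\supp X$, exhibit a probability measure with mean $v$, and use it to bound $I_X(v)$); the equality $\rint \mathcal{D}(I_X) = \rint C_X$ then follows by taking relative interiors of the sandwiched inclusions. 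For the hyperplane statement, if $L = \{y : u \cdot y = c\}$ supports $\cl C_X$ with $c = \sup_{x \in \cl C_X} u \cdot x$ and $\P(X \in L) = 0$, set $Y := c - u \cdot X$. Then $Y \ge 0$ a.s.\ and $Y > 0$ a.s., so $\E e^{-tY} \to 0$ as $t \to +\infty$ by dominated convergence; for $v \in L$,
\[
I_X(v) \ge tc - K_X(tu) = -\log \E e^{-tY} \to +\infty.
\]

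For part (b), the backbone is Rockafellar's Theorem~26.3: a closed proper convex function is essentially smooth iff its conjugate is essentially strictly convex. Since $K_X$ is already differentiable on $\intr \mathcal{D}(K_X)$, essential smoothness of $K_X$ is precisely the hypothesis that $\intr \mathcal{D}(K_X) \ne \varnothing$ and $K_X$ is steep. The remaining task is to equate essential strict convexity of $I_X$ with strict convexity on $\rint \mathcal{D}(I_X)$. The direction $(\Leftarrow)$ is immediate from $\rint \mathcal{D}(I_X) \subset \text{dom}(\partial I_X)$. For the contrapositive of $(\Rightarrow)$ I split cases. If $\intr \mathcal{D}(K_X) = \varnothing$, then $V := \aff \mathcal{D}(K_X)$ is a proper linear subspace (containing $0$); since $K_X = +\infty$ off $V$, a short computation gives $I_X(v) = I_X(v_V)$ where $v_V$ is the orthogonal projection onto $V$, so $I_X$ is constant along each coset of $V^\perp$, and such cosets meet $\rint \mathcal{D}(I_X)$. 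If instead $\intr \mathcal{D}(K_X) \ne \varnothing$ but $K_X$ is not steep, take $u_n \in \intr \mathcal{D}(K_X)$ with $u_n \to u^* \in \partial \mathcal{D}(K_X)$ and $v_n := \nabla K_X(u_n) \to v^*$; passing to the limit in the conjugacy identity $K_X(u_n) + I_X(v_n) = u_n \cdot v_n$ via lower semicontinuity yields $u^* \in \partial I_X(v^*)$, and using the outer normal $n$ to $\mathcal{D}(K_X)$ at $u^*$ one checks $I_X(v^* + tn) = I_X(v^*) + t u^* \cdot n$ on a short segment, so $I_X$ is affine there.

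The main obstacle is ensuring this affine segment lies inside $\rint \mathcal{D}(I_X) = \rint C_X$ rather than on its relative boundary. Each $\nabla K_X(u_n)$ is the mean of the exponentially tilted law of $X$, which has the same support as $X$ and thus lies in $\rint C_X$; passing this to the limit via part~(a) (which forbids $v^*$ from sitting on any supporting hyperplane of $\cl C_X$ that misses $\supp X$) places $v^*$ itself in $\rint C_X$, and the segment then remains in $\rint C_X$ for small $t > 0$ by openness of $\rint C_X$ in $\aff C_X$.
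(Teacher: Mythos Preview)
Your treatment of Part~(\ref{item: D_I inclusions}) matches the paper's for the two separation arguments. For the inclusion $\rint C_X\subset\mathcal D(I_X)$ the paper does not invoke a relative-entropy bound; it instead runs a self-contained Cram\'er--Chernoff argument, bounding $\P(S_n/n\in B)$ from above by a minimax swap and from below via Carath\'eodory, to derive a contradiction from the existence of a ball meeting $\cl C_X$ but missing $\cl\mathcal D(I_X)$. Your one-line sketch (``exhibit a probability measure with mean $v$, and use it to bound $I_X(v)$'') implicitly relies on the variational formula $I_X(v)=\inf\{H(\nu\mid\P_X):\int x\,d\nu=v\}$, which is itself a nontrivial theorem.

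In Part~(\ref{item: I strictly convex}) there is a genuine gap in your last paragraph. You argue that part~(\ref{item: D_I inclusions}) ``forbids $v^*$ from sitting on any supporting hyperplane of $\cl C_X$ that misses $\supp X$'' and conclude $v^*\in\rint C_X$. But part~(\ref{item: D_I inclusions}) only yields $I_X=+\infty$ on supporting hyperplanes $L$ with $\P(X\in L)=0$; it says nothing when $\P(X\in L)>0$. Thus you have not excluded that $v^*$ lies on a face of $C_X$ carrying positive mass, and in that case the segment $[v^*,v^*+\varepsilon n]$ need not meet $\rint\mathcal D(I_X)$ at all. A repair would require showing that every subgradient of $K_X$ at a point $u^*$ with $K_X(u^*)<\infty$ lies in $\rint C_X$; since $\nabla K_X(u_n)=\E_{u_n}[X]$ for tilted laws equivalent to $\P_X$, this is plausible, but passing the relative-interior property to the limit needs a uniform-integrability argument you have not supplied. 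The paper sidesteps this entirely by a different and cleaner route: when $\dim C_X=d$, H\"older's equality case forces $K_X$ to be \emph{strictly} convex, hence essentially strictly convex, hence $I_X$ is essentially smooth by \cite[Theorem~26.3]{Rockafellar}, hence $I_X$ is not subdifferentiable outside $\intr\mathcal D(I_X)$ by \cite[Theorem~26.1]{Rockafellar}. This identifies $\mathrm{dom}(\partial I_X)$ with $\rint\mathcal D(I_X)$ directly, so essential strict convexity of $I_X$ and strict convexity on $\rint\mathcal D(I_X)$ coincide automatically, and Theorem~26.3 in the other direction finishes. The lower-dimensional case is reduced to this one by an orthogonal map onto $\aff(\supp X)$.
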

We will prove this result  in full for completeness of exposition.
The inclusions in~\eqref{eq: D_I inclusions} are not new but we do not have exact references. They are stated in \cite[Theorem~9.1]{B-N}, which  however concerns only a specific type of distributions. They  follow from \cite[Theorems 2.1, 2.3,~3.2]{BahadurZabell} but do not appear in \cite{BahadurZabell} explicitly. The last claim of Part~\ref{item: D_I inclusions}) is in \cite[Theorem~9.5]{B-N}.  
Part~\ref{item: I strictly convex}) states a particular case of a general convex-analytic result~\cite[Theorem~26.3]{Rockafellar}, with the novelty that we strengthened the direct implication. 

The main result of this note is a necessary and sufficient condition (see Theorem~\ref{thm: main}) for  strict convexity of $I_X$ on its whole effective domain $\mathcal D(I_X)$. This condition is always satisfied when the Laplace transform of $X$ is finite in the whole of $\R^d$, and thus $I_X$ is strictly convex for such $X$. 

In view of Proposition~\ref{prop: known}, we only need to characterize strict convexity of the rate function on the relative boundary of $\mathcal D(I_X)$. 
Our approach is based on the following result. 
\begin{theorem} \label{thm: restriction}
Let $X$ be a random vector in $\R^d$, $d \ge 1$, and $L$ be a hyperplane in $\R^d$ supporting $C_X$ and such that $0< \P(X \in L) <1 $. Then
\begin{equation} \label{eq: I_X = I_X|L}
I_X(v)=  I_{X|L}(v) - \log \P(X \in L), \qquad v \in L,
\end{equation}
if and only if
\begin{equation} \label{eq: consistent proj}
\pr_L  (\rint \mathcal D(K_{X}) ) =  \pr_L  (\rint \mathcal D(K_{X|L}) ), 
\end{equation}
where $X|L$ is a random vector distributed as $X$ conditioned to be in $L$ and $\pr_L$ denotes the orthogonal projection from $\R^d$ onto $L$.
\end{theorem}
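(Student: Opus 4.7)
The plan is to set up coordinates with $L=\{x\in\R^d:n\cdot x=c\}$ for a unit vector $n$ and some $c\in\R$, and $L_0=L-L$. Since $L$ supports $C_X$, we have $n\cdot X\le c$ almost surely, with strict inequality on $\{X\notin L\}$. Decomposing $u\in\R^d$ as $u=u_L+tn$ with $u_L\in L_0$, $t\in\R$, I would compute $I_X(v)$ for $v\in L$ by first optimizing out the $n$-direction in the Legendre conjugate, then compare with $I_{X|L}(v)$, and finally reduce to the projection condition via convex duality.

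The key computation would split the expectation defining $K_X$ across $\{X\in L\}$ and its complement:
\[
K_X(u_L+tn)-tc=\log\Bigl(\P(X\in L)\,e^{K_{X|L}(u_L)}+\E\bigl[e^{u_L\cdot X+t(n\cdot X-c)}\I_{\{X\notin L\}}\bigr]\Bigr).
\]
The $t$-derivative has the sign of $\E[(n\cdot X-c)e^{u\cdot X}]<0$, so this expression is strictly decreasing in $t$ on its domain; and since $K_X(u+sn)\le sc+K_X(u)$ for $s\ge 0$, once finite at $t_0$ it remains finite for all $t\ge t_0$. Dominated convergence then gives
\[
\lim_{t\to+\infty}\bigl(K_X(u_L+tn)-tc\bigr)=\log\P(X\in L)+K_{X|L}(u_L).
\]
Using $u\cdot v=u_L\cdot v+tc$ for $v\in L$ and maximizing first in $t$ yields
\[
I_X(v)+\log\P(X\in L)=\sup_{u_L\in\pr_{L_0}(\mathcal D(K_X))}\bigl[u_L\cdot v-K_{X|L}(u_L)\bigr],
\]
while the shift-invariance $K_{X|L}(u_L+tn)=tc+K_{X|L}(u_L)$ gives $I_{X|L}(v)=\sup_{u_L\in\pr_{L_0}(\mathcal D(K_{X|L}))}\bigl[u_L\cdot v-K_{X|L}(u_L)\bigr]$.

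By Rockafellar's theorem that the relative interior commutes with linear surjections, together with continuity of a proper convex function on line segments from the relative interior to the closure of its effective domain, both suprema can be taken over $\pr_{L_0}(\rint\mathcal D(K_X))$ and $\pr_{L_0}(\rint\mathcal D(K_{X|L}))$, respectively. A translation by $cn$ converts $\pr_{L_0}$ into $\pr_L$, so~\eqref{eq: consistent proj} is precisely the equality of these two projection sets; this gives sufficiency immediately. For necessity, if~\eqref{eq: I_X = I_X|L} holds, then the two suprema agree on $L$, and hence on all of $L_0$ by shift-invariance in $n$, so the Legendre--Fenchel conjugates of the two restrictions of $K_{X|L}$ coincide. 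Fenchel--Moreau biconjugation combined with the invariance of the relative interior of the effective domain under lower semicontinuous convex closure (Rockafellar~Theorem~7.3) then delivers~\eqref{eq: consistent proj}.

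The main obstacle will be the necessity direction: one must argue that equality of two Legendre--Fenchel conjugates forces the two projection sets to share the same relative interior. The two restrictions of $K_{X|L}$ involved are not in general lower semicontinuous, so I would need to biconjugate carefully and keep track of the closures of the projected effective domains in order to convert equal biconjugates into equal relative interiors.
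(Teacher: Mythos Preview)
Your proposal is correct and follows essentially the same route as the paper. The paper packages the ``optimize out the normal direction'' step into an auxiliary function $\tilde K_{X|L}$ on $\R^d$ (equal to $K_{X|L}$ on the cylinder $\pr_L^{-1}(\pr_L\mathcal D(K_X))$ and $+\infty$ elsewhere) and shows $I_X(v)=(\tilde K_{X|L})^*(v)-\log\P(X\in L)$ for $v\in L$; you do the equivalent computation after projecting to $L_0$. Both arguments then reduce \eqref{eq: I_X = I_X|L} to equality of the conjugates of $K_{X|L}$ and its restriction, invoke Fenchel--Moreau together with lower semicontinuity of $K_{X|L}$, and use that the relative interior of the effective domain is unchanged under the closure operation (Rockafellar, Theorem~7.4) plus $\rint\circ\pr_L=\pr_L\circ\rint$ (Rockafellar, Theorem~6.6) to land on~\eqref{eq: consistent proj}. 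One small remark: your worry that ``the two restrictions of $K_{X|L}$ are not in general lower semicontinuous'' is half right---the unrestricted one, $K_{X|L}|_{L_0}$, \emph{is} lsc (restriction of an lsc function to a closed subspace), which is exactly what makes the biconjugation step go through; only the restriction to $\pr_{L_0}(\mathcal D(K_X))$ may fail to be lsc, and for that one you only need the invariance of $\rint$ of the domain under closure.
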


Assume that condition \eqref{eq: consistent proj}  is satisfied for every hyperplane $L$ as above. Then we can apply Proposition~\ref{prop: known} to each of the $I_{X|L}$'s. By \eqref{eq: I_X = I_X|L}, this ensures that $\rint \mathcal D(C_{X|L}) \subset \mathcal D(I_X)$, and  $I_X$ is strictly convex on every set $\rint \mathcal D(C_{X|L})$ if and only if the $K_{X|L}$'s are steep. The main idea is to apply this argument to the $X|L$'s and further on {\it recursively}, using that the sets $\rint \mathcal{D} (I_{X|L})$ are disjoint with $\rint \mathcal{D} (I_X) $  by $\dim C_{X|L}<\dim C_X$. Under appropriate conditions, which ensure that \eqref{eq: consistent proj}  is satisfied  at every step of the recursion, this lets us fully describe $\mathcal D(I_X)$ (see Corollary~\ref{cor: eff domain}) and characterize strict convexity of $I_X$ (see Theorem~\ref{thm: main}). We give the details in the next section, where we also comment on condition \eqref{eq: consistent proj} (see Remark~\ref{rem: Cond a}).

The property of strict convexity can be useful when proving uniqueness of solutions to minimization problems involving $I_X$. Such problems arise from large deviations principles, most naturally in Cram\'er's theorem  (see \cite[Section~2.4]{FirasTimo})  on random walks in $\R^d$. There are functional versions of this result, which describe scaled trajectories of random walks and continuous time analogues for L\'evy processes~(see \cite[Section~5.1 and~5.2]{DemboZeitouni} and \cite{Mogul}). In both cases, if the increments have finite Laplace transform, then the large deviations are described by the rate function $I$ of the form $I(f)=\int_0^1 I_X(f'(t))dt$ for $f$ in $AC_0$, the space of coordinate-wise absolutely continuous $\R^d$-valued functions on $[0,1]$ such that $f(0)=0$.

For concrete examples, let $(S_n)_{n \in \N}$ be a random walk with i.i.d.\ increments distributed as $X$.  When $I_X$ is strictly convex, its unique minimizer $b$  over a  convex Borel set $B \subset \R^d$ that meets $\mathcal D(I_X)$ can be interpreted as the limit constant in the law of large numbers for the averages $S_n/n$ conditioned to be in $B$. Under this conditioning, a typical trajectory $(S_k/n)_{1 \le k \le n}$ of the random walk with $\mathcal D(K_X) =\R^d$ is asymptotically linear with slope $b$ because the function $f_0(t)=bt$ is the unique minimizer of $I$ over the set $\{f \in AC_0: f(1) \in B \}$. This  follows from Jensen's inequality using that $I_X$ is strictly convex  (by Corollary~\ref{cor: strictly convex}). When $0 \in \intr \mathcal D(K_X)$ but $\mathcal D(K_X) \neq \R^d$, the rate function $I$ has a more complicated form, and without strict convexity~of~$I_X$ the argument above becomes less simple  (see~\cite[pp.~16-17]{Lifshits}).
More elaborate examples arise, e.g.\ in the study~\cite{AkopyanVysotsky} of large deviations of the perimeter and the area of convex hulls of planar random walks, where strict convexity of $I_X$ simplified~considerations. 

Finally, we note that relating the conditional limit laws to the minimizers of the rate function, as above, corresponds to the fundamental {\it Gibbs conditioning principle} of  statistical mechanics (see~\cite{DemboZeitouni}, including Sections~3.3 and~7.3).


\section{Main result}

We first recall some facts on the structure of convex sets.

A {\it face} of a non-empty convex set $C \subset \R^d$ is a convex subset $C'$ of $C$ such that every closed line segment in $C$ with a relative interior point in $C'$ has both endpoints in $C'$. Note that $C$ itself is a face; the zero-dimensional faces are called the {\it extreme points} of $C$. If $L \subset \R^d$ is a hyperplane supporting $C$, then $C \cap L$ is face of $C$. Every face of such form is called {\it exposed}.
 
Denote by $\mathcal F(C)$ the set of non-empty faces of $C$ and by $\mathcal F^*(C)$ its subset of {\it maximal proper faces}, defined by
\[
\mathcal F^*(C):= \big \{C' \in \mathcal F(C) \setminus \{C\} : C' \not \subset C'' \text{ for every } C'' \in \mathcal F(C) \setminus \{C, C'\} \big\}.
\] 
We will use extensively that every face in $\mathcal F^*(C)$ is exposed (this follows from \cite[Theorem~11.6 and Corollary~18.1.3]{Rockafellar}). Our need in the set $\mathcal F^*(C)$ is due to the following result.
\begin{lemma} \label{eq: C boundary}
Let $C \subset \R^n$ be a non-empty convex set. Then
\begin{equation} \label{eq: C without rint}
C \setminus \rint C= \bigcup_{C' \in \mathcal F^*(C)} C'.
\end{equation}
\end{lemma}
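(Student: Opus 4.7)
The plan is to prove the two inclusions in \eqref{eq: C without rint} separately.

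For the inclusion $\supseteq$, I would verify a stronger fact: every proper face $C' \subsetneq C$ satisfies $C' \cap \rint C = \varnothing$. Assume towards contradiction that $x \in C' \cap \rint C$, and let $y \in C$ be arbitrary. Since $x \in \rint C$, there is some $z \in C$ with $x$ in the relative interior of the segment $[y,z] \subset C$; the defining property of a face then forces both endpoints of this segment into $C'$, so $y \in C'$. As $y$ was arbitrary, $C' = C$, contradicting properness. Applied to every $C' \in \mathcal F^*(C)$, this yields the $\supseteq$ inclusion.

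For the inclusion $\subseteq$, I would fix $x \in C \setminus \rint C$ and produce a face in $\mathcal F^*(C)$ containing $x$. Since $x$ lies in the relative boundary of $C$, a standard supporting hyperplane argument (applied in $\aff C$ and then extended to $\R^n$ by adding a complementary direction) yields a hyperplane $L \subset \R^n$ that supports $C$, passes through $x$, and does not contain $C$. Then $F := C \cap L$ is an exposed proper face of $C$ with $x \in F$. To enlarge $F$ to a maximal proper face, I would argue that any strictly ascending chain of proper faces of $C$ has strictly increasing dimensions bounded above by $\dim C - 1$; hence such chains terminate and $F \subset C'$ for some $C' \in \mathcal F^*(C)$, giving $x \in C'$.

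The one point I would need to check carefully, and the only real obstacle, is the dimension bound that justifies termination: a face $F'$ of $C$ with $\dim F' = \dim C$ would have $\rint F'$ as a non-empty relatively open subset of $\aff C$ contained in $\cl C$, hence in $\rint(\cl C) = \rint C$ (the last equality holding for any convex set in finite dimensions). Then $F' \cap \rint C \neq \varnothing$, and the argument used for the first inclusion forces $F' = C$. With this observation in hand, both inclusions combine to give \eqref{eq: C without rint}.
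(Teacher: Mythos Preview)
Your proof is correct and follows essentially the same route as the paper's: both prove the two inclusions separately, use a supporting hyperplane through a relative boundary point to produce a proper exposed face, and then pass to a maximal proper face via a dimension argument. The only difference is cosmetic: the paper cites Rockafellar (Corollary~18.1.3 for the fact that proper faces miss $\rint C$ and have strictly smaller dimension, Theorem~11.6 for the supporting hyperplane) and frames the passage to a maximal face as an induction on $\dim C'$, whereas you unpack these facts directly from the definitions and phrase the same induction as termination of ascending chains. One small point worth making explicit in your write-up: the claim that a strictly ascending chain $F_1 \subsetneq F_2$ of faces of $C$ has $\dim F_1 < \dim F_2$ uses that $F_1$ is then a proper face of $F_2$, so your dimension bound applies with $F_2$ in place of $C$; you have all the ingredients for this, but it is not stated.
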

\begin{proof}
In fact, the set $C \setminus \rint C$ contains every proper face of $C$ by \cite[Corollary~18.1.3]{Rockafellar}. On the other hand, by  \cite[Theorem~11.6]{Rockafellar}, for every point in $C \setminus \rint C$  there is a hyperplane $L$ containing this point and supporting $C$ but not containing $C$. Then $C \cap L$ is a proper face of $C$. To finish the proof it remains to argue that for any proper face $C'$ of $C$ is contained in a maximal proper face of $C$. 

Let us use induction in $\dim C'$. In the base case $\dim C'=\dim C-1$, we always have $C' \in \mathcal F^*(C)$. Indeed, if this were not true, there would be a proper face $C''$ of $C$ other than $C'$ that strictly contains $C'$. Then $C'$ would be a face of $C''$ (by definition of a face), hence $\dim C' < \dim C''$ by \cite[Corollary~18.1.3]{Rockafellar}. This is a contradiction because there are no faces of $C$ other than itself of dimension $\dim C$. 

Let us prove the inductive step. If $C'$ is maximal, we are done. Otherwise, choose $C''$ as above. By the assumption of induction, there is a  $C''' \in \mathcal F^*(C)$ containing $C''$. This is a face required.
 \end{proof}

\medskip

We now consider faces of the convex support $C_X$ of a random vector $X$ in $\R^d$. First note that $C_X$ is not necessarily closed; it can be even open.
\begin{example}
Let $X$ be a random vector in $\R^2$ such that $\supp X=\{(x,y) \in \R^2: y \ge \frac{1}{1+x^2}\}$. Then $C_X$ is the open upper half-plane.
\end{example}
However, we have the following measurability result.
\begin{lemma} \label{lem: measurable}
Let $X$ be a random vector in $\R^d$,  $d \ge 1$. Then $C_X$ is a Borel subset of $\R^d$, and so is every $C \in \mathcal F^*(C_X)$.
\end{lemma}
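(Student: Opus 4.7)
The plan is to treat the two assertions separately, and the first one carries all the real content. To show $C_X$ is Borel, I would start by using that $\supp X$ is closed in $\R^d$ by definition, and is therefore $\sigma$-compact: writing $K_n := \supp X \cap \overline{B(0,n)}$, each $K_n$ is compact and $\supp X = \bigcup_{n=1}^{\infty} K_n$. Carath\'eodory's theorem then lets me present $\conv K_n$ as the image of the compact set $K_n^{d+1} \times \Delta_d$, where $\Delta_d$ is the standard $d$-simplex, under the continuous convex-combination map $(x_0,\ldots,x_d,\lambda_0,\ldots,\lambda_d) \mapsto \sum_{i=0}^d \lambda_i x_i$. Each $\conv K_n$ is therefore compact. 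Since any point of $\conv(\supp X)$ is a convex combination of at most $d+1$ points of $\supp X$, and these finitely many points all lie in some $K_n$, I obtain $C_X = \bigcup_{n=1}^{\infty} \conv K_n$, which shows that $C_X$ is $F_\sigma$, hence Borel.

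For a face $C \in \mathcal F^*(C_X)$, I would invoke the fact already recalled in the excerpt that every such face is exposed, i.e.\ $C = C_X \cap L$ for some hyperplane $L \subset \R^d$ supporting $C_X$. Since $L$ is closed in $\R^d$ and $C_X$ is Borel by the first step, the intersection $C = C_X \cap L$ is Borel as well.

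The main point worth flagging is that an arbitrary convex subset of $\R^d$ need not be Borel: if $D$ is the open unit disk in $\R^2$ and $E \subset \partial D$ is any (possibly non-measurable) subset of the boundary circle, a short verification shows that $D \cup E$ is convex. So the argument cannot rely solely on abstract convex-analytic properties of $C_X$; it is essential that $C_X$ arises as the convex hull of a \emph{closed}, hence $\sigma$-compact, set, and this is exactly what the Carath\'eodory-based representation above is designed to exploit. Everything else in the proof is routine.
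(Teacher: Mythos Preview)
Your proof is correct and follows essentially the same approach as the paper's: both write $C_X=\bigcup_n \conv(\supp X\cap \overline{B(0,n)})$ via Carath\'eodory, note that each term is compact (you argue directly via the continuous image of $K_n^{d+1}\times\Delta_d$, the paper cites \cite[Theorem~17.2]{Rockafellar}), and then handle the faces by intersecting with a supporting hyperplane. Your remark on non-Borel convex sets is a helpful aside but not needed for the argument.
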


\begin{proof}
By Carath\'eodory's theorem (\cite[Theorem~17.1]{Rockafellar}), every point in $C_X$ is a convex combination of $d+1$ points in $\supp X$. Then
$C_X = \cup_{n =1}^\infty \conv \big( \supp X \cap \{u \in \R^d: \| u \| \le n\} \big)$. By \cite[Theorem~17.2]{Rockafellar}, each set under the union is closed, and hence $C_X$ is Borel. 

Every $C \in \mathcal F^*(C_X)$ is an exposed face of $C_X$, therefore $C=C_X \cap L$ for some affine hyperplane $L$ supporting $C_X$. Hence $C$ also is a Borel set.
\end{proof}



The lemma ensures that the following set is well-defined:
\[
\mathcal F^*_+(C_X):=\{C \in \mathcal F^*(C_X): \P(X \in C) >0\}.
\]
In the results below it is useful to know when this set is empty. We give the following~criterion.
\begin{lemma} \label{lem: F^*_+ empty}
Let $X$ be a random vector in $\R^d$,  $d \ge 1$. Then $\mathcal F^*_+(C_X) $ is empty if and only if there is no hyperplane $L$ in $\R^d$ supporting $C_X$ and such that $0<\P(X \in L)<1$.
\end{lemma}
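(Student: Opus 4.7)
The plan is to argue each direction of the equivalence directly, in its contrapositive form: $\mathcal F^*_+(C_X)$ is non-empty if and only if there exists a hyperplane $L \subset \R^d$ supporting $C_X$ with $0 < \P(X \in L) < 1$. Throughout, I will use without further mention that every $C \in \mathcal F^*(C_X)$ is Borel (by Lemma~\ref{lem: measurable}), so all probabilities below are well-defined, and that $\P(X \in C_X) = 1$ since $X \in \supp X \subset C_X$ almost surely.

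For the forward direction, I would take any $C \in \mathcal F^*_+(C_X)$. Since $C$ is a maximal proper face of $C_X$, the fact recalled just before Lemma~\ref{eq: C boundary} gives that $C$ is exposed, so $C = C_X \cap L$ for some hyperplane $L$ supporting $C_X$. The lower bound is immediate: $\P(X \in L) \ge \P(X \in C) > 0$ by definition of $\mathcal F^*_+$. For the upper bound, if $\P(X \in L) = 1$ then, by closedness of $L$ and minimality of $\supp X$, one has $\supp X \subset L$; hence $C_X = \conv(\supp X) \subset L$ and $C = C_X \cap L = C_X$, contradicting that $C$ is a proper face of $C_X$.

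For the reverse direction, let $L$ be a hyperplane as in the statement and set $C_0 := C_X \cap L$. This is a non-empty exposed face of $C_X$. It is proper, since otherwise $C_X \subset L$ would force $\supp X \subset \cl C_X \subset L$ and so $\P(X \in L) = 1$, contrary to assumption. Using $\P(X \in C_X) = 1$, I get $\P(X \in C_0) = \P(X \in L \cap C_X) = \P(X \in L) > 0$. Finally, the recursive step at the end of the proof of Lemma~\ref{eq: C boundary} shows that every proper face of $C_X$ is contained in some maximal proper face; applying it to $C_0$ yields $C' \in \mathcal F^*(C_X)$ with $C_0 \subset C'$, and then $\P(X \in C') \ge \P(X \in C_0) > 0$ delivers $C' \in \mathcal F^*_+(C_X)$, as required.

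The only point demanding a little care is to keep the supporting hyperplane $L$ and the exposed face $C_X \cap L$ conceptually separate and to exploit $\P(X \in C_X) = 1$ when passing between $\P(X \in L)$ and $\P(X \in L \cap C_X)$. Beyond this bookkeeping I expect no real obstacle, since the nontrivial structural inputs (exposedness of maximal proper faces and the enlargement of an arbitrary proper face to a maximal one) have already been established in the preceding lemma.
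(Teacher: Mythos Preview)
Your proof is correct and follows essentially the same route as the paper: both directions hinge on the exposedness of maximal proper faces and on the observation that $\P(X\in L)=1$ forces $\supp X\subset L$ (hence $C_X\subset L$) by closedness of $L$. One small point where you are in fact more careful than the paper: in the direction ``$\exists\,L$ with $0<\P(X\in L)<1 \Rightarrow \mathcal F^*_+(C_X)\neq\varnothing$'', the paper asserts directly that $C_X\cap L\in\mathcal F^*(C_X)$, whereas $C_X\cap L$ is a priori only a proper exposed face and need not be maximal (think of a supporting hyperplane touching a cube along an edge). Your enlargement step, invoking the argument at the end of Lemma~\ref{eq: C boundary} to pass from $C_0=C_X\cap L$ to a maximal proper face $C'\supset C_0$, is exactly what is needed to close this small gap.
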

\begin{proof}
If $L$ is a hyperplane supporting $C_X$, then either $C_X \subset L$, in which case $\P(X \in L)=1$, or $C_X \cap L \in \mathcal F^*(C_X)$, hence from $\mathcal F^*_+(C_X) = \varnothing$ we get $\P(X \in C_X \cap L)=0$ and thus $\P(X \in L)=0$. This proves the direct implication.

To prove the reverse implication, assume that there is a $C \in \mathcal F^*_+(C_X) $. This is an exposed face of $C_X$, therefore $C=C_X \cap L$ for some affine hyperplane $L$ supporting $C_X$. Then $\P(X \in L) = \P(X \in C)>0$, hence by the assumption, it must be $\P(X \in L)=1$. Hence $\supp X \subset L$ (because $L$ is a closed set) and therefore $C_X \subset L$. Thus, $C$ is not a proper face of $C_X$, which is a contradiction.
\end{proof}

For every random vector $X$ in $\R^d$ and $C \in \mathcal F^*_+(C_X)$, let $X|C$ be a random vector distributed as $X$ conditioned on $X \in C$.

We now give two key definitions, both having recursive structure.

\begin{definition} \label{def: proj property}
We say that $K_X$ has the {\it projection property}\footnote{Strictly speaking, this is a property of the distribution of $X$ rather than  of $K_X$. However, the distributions that satisfy $\intr \mathcal D(K_X) \neq \varnothing$  are determined by their Laplace transform (this reduces to $d=1$, where~Theorem~6a in Chapter~VI of~\cite{Widder} applies). Note that  our main result, Theorem~\ref{thm: main}.\ref{item: I strictly convex iff}, assumes $\intr \mathcal D(K_X) \neq \varnothing$.} if 
\begin{enumerate}
\item \label{item: consistent proj} for every hyperplane $L$ in $ \R^d$ supporting $C_X$ and such that $0<\P(X \in L)<1$, we have
\[
\pr_L  (\rint \mathcal D(K_{X}) ) = \pr_L  (\rint \mathcal D(K_{X|L}) ); 
\]
\item \label{item: K_X|C proj} $K_{X|C}$ has the projection property for every $C \in \mathcal F^*_+(C_X)$. 
\end{enumerate}
\end{definition}

The projection property is well-defined since the definition allows us to identify, using recursion in $\dim C_X$, whether each particular $K_X$ has this property nor not. This is true because 1)~$\dim C_{X|C} \le \dim C < \dim C_X$ for every $C \in \mathcal F^*_+(C_X)$; 2) the recursion terminates (confirming that $K_X$ has the property) if Conditions \ref{item: consistent proj}) and~\ref{item: K_X|C proj}) hold vacuously, namely when $\mathcal F^*_+(C_X) = \varnothing$ (by Lemma~\ref{lem: F^*_+ empty}); and 3) the recursion always terminates since $\mathcal F^*_+(C_X) = \varnothing$ when $\dim C_X=0$, i.e.\ $X$ is constant~a.s.  

\begin{remark} \label{rem: Cond a} 
Let us comment on Condition~\ref{item: consistent proj}). 
\begin{enumerate}
\item \label{item: proj equiv} 
Each set $ \mathcal D(K_{X|L})$ is a right cylinder. So is its relative interior, which satisfies
\begin{equation*} 
\pr_L  (\rint \mathcal D(K_{X|L}) ) = \rint (\pr_L  \mathcal D(K_{X|L}) ) = \rint (L \cap  \mathcal D(K_{X|L})) = L \cap  \rint \mathcal D(K_{X|L}),
\end{equation*}
where the first and the last equalities follow from~\cite[Theorem~6.6 and Corollary~6.5.1]{Rockafellar}.

\item \label{item: proj inclusion} We always have $\mathcal D(K_X) \subset \mathcal D(K_{X|L})$. This follows from
\begin{equation} \label{eq: K proj ineq}
K_X(u) \ge \log \E \big( e^{u \cdot X} \I_{\{X \in L\}} \big)  = K_{X|L}(u)  + \log \P(X \in L), \qquad u \in \R^d.
\end{equation}
Assume additionally that  $\mathcal D(K_X)$ is not entirely contained in the relative boundary of $ \mathcal D(K_{X|L})$; this holds, e.g.\ when  $\intr \mathcal D(K_X) \neq \varnothing$ or $0 \in \rint \mathcal D(K_{X|L}) $. Then  
\begin{equation} \label{eq: proj inclusion}
\pr_L  (\rint  \mathcal D(K_{X}) ) \subset  \pr_L  (\rint \mathcal D(K_{X|L}) )
\end{equation}
because $\rint \mathcal D(K_X) \subset \rint \mathcal D(K_{X|L})$ by \cite[Corollary 6.5.2]{Rockafellar}. Thus, \eqref{eq: consistent proj}  means that the projection of $\rint \mathcal D(K_{X})$ on $L$ does not increase if $X$ is replaced by $X|L$. 

\item \label{item: boundary directions} Every supporting hyperplane $L$ to $C_X$ is of the form $L=\{v \in \R^d: \ell \cdot v = h_{C_X}(\ell) \}$, where $\ell \in \S^{d-1}$ is a unit vector orthogonal to $L$ and $h_{C_X}$ is the {\it support function} of $C_X$ defined by $h_{C_X}(u):= \sup_{v \in C_X} u \cdot v$, $u \in \R^d$.
Since $\ell \in \mathcal D (h_{C_X})$ if and only if $C_X$ is bounded in direction~$\ell$ (equivalently, $\supp X$ is bounded in direction~$\ell$), this implies that $\{a \ell: a \ge 0\} \subset \mathcal D(K_X)$. It is therefore easy to see that $\pr_L (\rint \mathcal D(K_X)) = L$ when $\ell \in \intr \mathcal D (h_{C_X})$. Hence for such $\ell$ equality~\eqref{eq: consistent proj} always holds true by $\mathcal D(K_X) \subset \mathcal D(K_{X|L})$.

Thus, it suffices to check the assumption of Condition~a) only for hyperplanes supporting $C_X$ that are orthogonal to directions in the set $\partial \mathcal D (h_{C_X}) \cap \S^{d-1}$. For $d=2$ this set contains at most two directions because $\mathcal D (h_{C_X})$ is a convex cone.

\end{enumerate}
\end{remark}

We now give a few examples.

\begin{example} \label{ex: proj property}
$K_X$ has the projection property in the following cases:
\begin{enumerate}
\item $\mathcal F^*_+(C_X)$ is empty. In particular,  this holds true when $\P(X  \in \partial_{\text{rel}} C_X) = 0$; see~\eqref{eq: C without rint}. 

\item \label{item: ex finite Laplace} $\mathcal D(K_X) = \R^d$ or, equivalently, $\E e^{u \cdot X} < \infty$ for every $u \in \R^d$; cf.~\eqref{eq: K proj ineq}.

\item \label{item: 1D} $d=1$.

\item $d=2$ and  equality \eqref{eq: consistent proj} holds true for every line $L$ of the form $L=\aff C$, where $C\in \mathcal F^*_+(C_X)$ is  unbounded  (there are at most two such faces). \\
Indeed, such lines are orthogonal to the directions in $\partial \mathcal D (h_{C_X}) \cap \S^1$ and then Remark~\ref{rem: Cond a}.\ref{item: boundary directions} applies. Clearly, Condition~\ref{item: consistent proj}) in Definition~\ref{def: proj property} is satisfied by Example~\ref{ex: proj property}.\ref{item: 1D}   since $\dim C_{X|C} \le 1$ for every $C \in \mathcal F^*_+(C_X)$.
\end{enumerate}
\end{example}

Our second key definition is as follows. 

\begin{definition} \label{def: totally steep}
If $\mathcal D(K_X)$ has non-empty interior, we say that $K_X$ is {\it totally steep} if $K_X$ is steep and $K_{X|C}$ is totally steep for every $C \in \mathcal F^*_+(C_X)$. 
\end{definition} 

Again, this property is well-defined by recursion in $\dim C_X$ because  1)~$\dim C_{X|C} < \dim C_X$ for every $C \in \mathcal F^*_+(C_X)$; 2)  $\intr \mathcal D(K_{X|C}) \neq \varnothing$ for $C \in \mathcal F^*_+(C_X)$ by $ \mathcal D(K_X) \subset \mathcal D(K_{X|C})$ (cf.~\eqref{eq: K proj ineq}); 3)  $K_X$ is totally steep when it is steep and $\mathcal F^*_+(C_X) = \varnothing$; and 4) $K_X$ is totally steep when $\dim C_X=0$.

\begin{example} \label{ex: totally steep}
$K_X$ is totally steep if $\mathcal D(K_X) = \R^d$.
\end{example}

\begin{example} \label{ex: non totally steep}
Let us construct $K_X$ which neither has the projection property nor is totally steep. Put $X:=(\alpha X_1,\alpha X_2 + (1-\alpha) X_3)$, where $X_1$, $X_2$, $X_3$, $\alpha$ are independent non-negative random variables such that $X_1$ and $X_2$ have the standard exponential distribution with density $e^{-x}$ for $x>0$, $X_3$ has the absolutely continuous distribution with density proportional to $e^{-2x}/(1+x^3)$ for $x>0$, and $\P(\alpha=0)=\P(\alpha=1)=1/2$. 

We have $\mathcal D(K_{X_3})=(-\infty, 2]$ and it is easy to check that $K_{X_3}'(2-)<+\infty$, hence $K_{X_3}$ is not steep; and $K_{X_1}$ is steep.  Furthermore, $K_X(u_1, u_2)=\frac12 K_{X_1}(u_1) +  \frac12 K_{X_2}(u_2) + \frac12 K_{X_3}(u_2)$ for $u_1, u_2 \in \R$; the set $C_X$ is the closed positive quadrant in the plane; and $\mathcal F^*_+(C_X)=\{C\}$ with $C:=\{0\} \times [0, \infty)$. We can see that $K_X$ is steep but not totally steep because $\mathcal D(K_X)=(-\infty,1 ) \times (-\infty, 1)$ but  for the ordinate line $L=\aff C$ supporting $C_X$, the random vector $X|L$ is distributed as $(0, X_3)$ and thus $K_{X|L}$ is not steep. This also shows that Condition~\ref{item: consistent proj}) in Definition~\ref{def: proj property} is violated because $\pr_L (\rint \mathcal D(K_{X|L})) = \{0\} \times (-\infty, 2)$ but  $\pr_L (\rint \mathcal D(K_X)) = \{0\} \times (-\infty, 1)$, and thus $K$ does not have the projection property.
\end{example}

We are now ready to state the main result of the paper.

\begin{theorem} \label{thm: main}
Let $X$ be a random vector in $\R^d$,  $d \ge 1$. 
\begin{enumerate}
\item \label{item: I on faces} 
If $K_X$ satisfies Condition~\ref{item: consistent proj}) in Definition~\ref{def: proj property} of the projection property, then
\begin{equation} \label{eq: D I_X max faces}
\mathcal F^*(\mathcal D(I_X)) \subset \{\mathcal D(I_{X|C}) : C \in \mathcal F^*_+(C_X) \} \subset \mathcal F(\mathcal D(I_X)) \setminus \{\mathcal D(I_X) \}.
\end{equation} 

\item \label{item: I strictly convex iff}
$I_X$ is strictly convex if and only if  $\intr \mathcal D(K_X) \neq \varnothing$, $K_X$ has the projection property, and $K_X$ is totally steep.
\end{enumerate}
\end{theorem}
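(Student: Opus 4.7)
The plan is to prove Part~\ref{item: I on faces}) directly and then Part~\ref{item: I strictly convex iff}) by strong induction on $d':=\dim C_X$, using Proposition~\ref{prop: known} to handle the top-dimensional stratum $\rint \mathcal D(I_X)$ and Theorem~\ref{thm: restriction} to pass to lower-dimensional strata via conditioning.

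For Part~\ref{item: I on faces}), the inclusions in Proposition~\ref{prop: known}.\ref{item: D_I inclusions}) give $\rint \mathcal D(I_X)=\rint C_X$, so $\mathcal D(I_X)$ and $C_X$ share their closure, relative interior, and therefore their supporting hyperplanes. Fix a maximal proper face $F\in\mathcal F^*(\mathcal D(I_X))$; being exposed, $F=\mathcal D(I_X)\cap L$ for some hyperplane $L$ supporting $C_X$. Non-emptiness of $F$ combined with Proposition~\ref{prop: known}.\ref{item: D_I inclusions}) forces $\P(X\in L)>0$, and $L\not\supset C_X$ gives $\P(X\in L)<1$, so $C:=C_X\cap L\in\mathcal F^*_+(C_X)$. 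Condition~\ref{item: consistent proj}) of the projection property together with Theorem~\ref{thm: restriction} give $I_X=I_{X|L}-\log\P(X\in L)$ on $L$, whence $F=\mathcal D(I_{X|L})$; a short argument based on $\supp X\cap L\subset \cl C$ and Proposition~\ref{prop: known}.\ref{item: D_I inclusions}) identifies $\mathcal D(I_{X|L})=\mathcal D(I_{X|C})$. The second inclusion follows symmetrically: every $C\in\mathcal F^*_+(C_X)$ gives a proper (though not necessarily maximal) face $\mathcal D(I_{X|C})=\mathcal D(I_X)\cap L_C$ of $\mathcal D(I_X)$.

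For Part~\ref{item: I strictly convex iff}) we induct on $d'$; the base $d'=0$ is immediate. For the $(\Leftarrow)$ direction, Proposition~\ref{prop: known}.\ref{item: I strictly convex}) yields strict convexity on $\rint \mathcal D(I_X)$, and for each $C\in\mathcal F^*_+(C_X)$ the recursive Definitions~\ref{def: proj property} and \ref{def: totally steep} transmit the hypotheses to $X|C$, so by the induction hypothesis $I_{X|C}$ is strictly convex; the identity from Part~\ref{item: I on faces}) then transfers this to $I_X$ on each $\mathcal D(I_{X|C})$. To conclude strict convexity on all of $\mathcal D(I_X)$, any non-trivial segment $[a,b]$ has $(a,b)\subset \rint F$ for the smallest face $F$ of $\mathcal D(I_X)$ containing both endpoints; if $F=\mathcal D(I_X)$ the preceding applies, and otherwise by Part~\ref{item: I on faces}) together with the fact (established inside the proof of Lemma~\ref{eq: C boundary}) that every proper face is contained in a maximal one, we reduce to some $\mathcal D(I_{X|C})$. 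For $(\Rightarrow)$, strict convexity on $\rint \mathcal D(I_X)$ yields $\intr \mathcal D(K_X)\neq\varnothing$ and $K_X$ steep by Proposition~\ref{prop: known}.\ref{item: I strictly convex}); analogously, strict convexity of $I_X$ on each $\mathcal D(I_X)\cap L_C$ transfers to $I_{X|C}$, and induction delivers total steepness and the recursive Condition~\ref{item: K_X|C proj}) of the projection property.

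I expect the main obstacle to be the remaining assertion in the $(\Rightarrow)$ direction, namely that strict convexity of $I_X$ forces Condition~\ref{item: consistent proj}) of the projection property at the top level. Concretely, if this condition fails for some supporting hyperplane $L$ with $0<\P(X\in L)<1$, Theorem~\ref{thm: restriction} yields strict inequality $I_X(v_0)<I_{X|L}(v_0)-\log\P(X\in L)$ at some $v_0\in L$, and one must convert this pointwise gap into a positive-length segment of $L$ on which $I_X$ is affine, contradicting strict convexity. The mechanism is the one illustrated in Example~\ref{ex: non totally steep}: the strict containment $\pr_L(\rint \mathcal D(K_X))\subsetneq \pr_L(\rint \mathcal D(K_{X|L}))$ means that along certain directions in $L$ the supremum defining $I_X(v)$ saturates at the relative boundary of $\mathcal D(K_X)$, making $I_X$ affine along these directions on the relevant portion of $L$.
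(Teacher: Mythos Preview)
Your plan for Part~\ref{item: I on faces}) and the $(\Leftarrow)$ direction of Part~\ref{item: I strictly convex iff}) essentially matches the paper, but there is a slip in Part~\ref{item: I on faces}): from $F=\mathcal D(I_X)\cap L$ with $L$ supporting $C_X$ it does \emph{not} follow that $C:=C_X\cap L$ is a \emph{maximal} proper face of $C_X$; the paper explicitly flags this. The fix (which the paper carries out) is to choose any $C'\in\mathcal F^*(C_X)$ with $C\subset C'$, apply Theorem~\ref{thm: restriction} to a hyperplane $L'$ exposing $C'$, obtain the proper face $F':=\mathcal D(I_X)\cap L'=\mathcal D(I_{X|C'})$, check $F\subset F'$ via $\cl C_{X|C}\subset\cl C_{X|C'}$, and conclude $F=F'$ by maximality of $F$.

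The substantive gap is the one you yourself flag in the $(\Rightarrow)$ direction: you have not proved that strict convexity of $I_X$ forces Condition~\ref{item: consistent proj}) at the top level, and the heuristic about ``converting a pointwise gap into an affine segment'' is not an argument. The paper does not argue by contradiction on $I_X$ directly; it goes through Lemma~\ref{lem: technical}. By~\eqref{eq: I_X on L}, strict convexity of $I_X$ makes $(\tilde K_{X|L})^*$ strictly convex, hence by \cite[Theorem~26.3]{Rockafellar} the function $(\tilde K_{X|L})^{**}$, and therefore $\tilde K_{X|L}$ itself, is steep with $\intr\mathcal D(\tilde K_{X|L})\neq\varnothing$. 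One then shows $\intr\mathcal D(\tilde K_{X|L})=\intr\mathcal D(K_{X|L})$: if some $u\in\partial\mathcal D(\tilde K_{X|L})\cap\intr\mathcal D(K_{X|L})$ existed, approaching $u$ from within $\intr\mathcal D(\tilde K_{X|L})$ would give $|\nabla K_{X|L}(u_n)|\to\infty$ (steepness of $\tilde K_{X|L}$, which agrees with $K_{X|L}$ on that interior), contradicting continuous differentiability of $K_{X|L}$ at $u$. Combined with~\eqref{eq: technical proj} this yields~\eqref{eq: consistent proj}. Note too that your transfer step ``strict convexity on $\mathcal D(I_X)\cap L_C$ passes to $I_{X|C}$'' already presupposes the identity of Theorem~\ref{thm: restriction}, i.e.\ Condition~\ref{item: consistent proj}); so that condition must be secured first, as the paper does, before the induction can run.
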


Let us present a few corollaries.

\begin{cor} \label{cor: strictly convex}
If $\E e^{u \cdot X} < \infty $ for every $u \in \R^d$, then $I_X$  is strictly convex.
\end{cor}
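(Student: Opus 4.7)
The plan is a direct application of Theorem~\ref{thm: main}.\ref{item: I strictly convex iff}: under the hypothesis $\E e^{u \cdot X} < \infty$ for all $u \in \R^d$, that is $\mathcal D(K_X) = \R^d$, I would verify each of the three conditions (non-empty interior of $\mathcal D(K_X)$, the projection property, and total steepness). The first is immediate, since $\intr \mathcal D(K_X) = \R^d \neq \varnothing$.

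The second and third conditions are recursive in nature, and the key observation powering both verifications is that finiteness of the Laplace transform propagates to conditional distributions. More precisely, for any Borel set $A$ with $\P(X \in A) > 0$, splitting the expectation gives
\[
\E e^{u \cdot X} \ge \E \bigl( e^{u \cdot X} \I_{\{X \in A\}} \bigr) = \P(X \in A) \cdot \E\bigl(e^{u \cdot X} \mid X \in A \bigr),
\]
so $K_X(u) \ge K_{X | A}(u) + \log \P(X \in A)$ for every $u \in \R^d$. Hence $\mathcal D(K_X) = \R^d$ forces $\mathcal D(K_{X|L}) = \R^d$ for every supporting hyperplane $L$ with $\P(X \in L) > 0$, and analogously $\mathcal D(K_{X|C}) = \R^d$ for every $C \in \mathcal F^*_+(C_X)$. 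The hypothesis is thus inherited at every level of the recursion.

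With this in hand, both remaining conditions collapse. For the projection property (Definition~\ref{def: proj property}), at each stage of the recursion both $\rint \mathcal D(K_X)$ and $\rint \mathcal D(K_{X|L})$ equal the ambient Euclidean space, so their orthogonal projections onto $L$ coincide trivially. For total steepness (Definition~\ref{def: totally steep}), $K_X$ is vacuously steep at the boundary of $\mathcal D(K_X)$ because $\partial \mathcal D(K_X) = \varnothing$, and the same conclusion applies at every subsequent step by the propagation established above.

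I do not anticipate a substantive obstacle: the entire argument amounts to bookkeeping with the recursive definitions, invoking at each step Examples~\ref{ex: proj property}.\ref{item: ex finite Laplace} and~\ref{ex: totally steep}, whose justifications reduce to the single display above. The only point deserving a moment of care is confirming that the recursion terminates, which is ensured by the strict drop $\dim C_{X|C} < \dim C_X$ for $C \in \mathcal F^*_+(C_X)$, as already noted after Definitions~\ref{def: proj property} and~\ref{def: totally steep}.
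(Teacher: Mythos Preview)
Your proposal is correct and follows precisely the paper's own approach: the paper's proof is the one-line remark that the conclusion follows from Theorem~\ref{thm: main}.\ref{item: I strictly convex iff} via Examples~\ref{ex: proj property}.\ref{item: ex finite Laplace} and~\ref{ex: totally steep}, and you have simply unpacked the content of those two examples (propagation of $\mathcal D(K_X)=\R^d$ to conditional laws, making both recursive conditions trivial).
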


\begin{proof}
This follows directly from Part~\ref{item: I strictly convex iff}) using Examples~\ref{ex: proj property}.\ref{item: ex finite Laplace} and~\ref{ex: totally steep}.
\end{proof}

\begin{cor} \label{cor: eff domain}
If $K_X$ has the projection property, then $\mathcal D(I_X) \subset C_X $ and
\[
\mathcal D(I_X)= \rint C_X \cup \bigcup_{C_1 \in \mathcal F^*_+(C_X)} \rint C_{X|C_1} \cup \bigcup_{C_2 \in \mathcal F^*_+(C_{X|C_1})} \rint C_{X|C_2} \cup \ldots \cup \bigcup_{C_d \in \mathcal F^*_+(C_{X|C_{d-1}})} \rint C_{X|C_d}. 
\]
\end{cor}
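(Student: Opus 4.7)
My plan is to establish the single-step recursive identity
\[
\mathcal D(I_X) \;=\; \rint C_X \;\cup\; \bigcup_{C \in \mathcal F^*_+(C_X)} \mathcal D(I_{X|C})
\]
and then iterate it. To prove this identity, I would combine Lemma~\ref{eq: C boundary}, applied to the convex set $\mathcal D(I_X)$, with Proposition~\ref{prop: known}.\ref{item: D_I inclusions} and Theorem~\ref{thm: main}.\ref{item: I on faces}. Lemma~\ref{eq: C boundary} decomposes $\mathcal D(I_X)$ as $\rint \mathcal D(I_X)$ together with $\bigcup_{D \in \mathcal F^*(\mathcal D(I_X))} D$; the relative interior equals $\rint C_X$ by Proposition~\ref{prop: known}.\ref{item: D_I inclusions}. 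The chain of inclusions in Theorem~\ref{thm: main}.\ref{item: I on faces} then equates the second piece with $\bigcup_{C \in \mathcal F^*_+(C_X)} \mathcal D(I_{X|C})$: the first inclusion there directly gives $\subset$, while the second inclusion says each $\mathcal D(I_{X|C})$ is a proper face of $\mathcal D(I_X)$, which (by the argument used inside the proof of Lemma~\ref{eq: C boundary}) is contained in some maximal proper face, giving $\supset$.

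Next I would iterate. Each $K_{X|C}$ with $C \in \mathcal F^*_+(C_X)$ inherits the projection property by Definition~\ref{def: proj property}.\ref{item: K_X|C proj}, so the identity above applies to it as well. Since $\dim C_{X|C} \le \dim C \le \dim C_X - 1 \le d-1$, the recursion terminates after at most $d$ levels (eventually $C_X$ shrinks to a single point, where $\mathcal F^*_+$ is empty). A small bookkeeping remark: if $C' \in \mathcal F^*_+(C_{X|C})$ then $C' \subset C$, and a one-line Bayes' formula computation shows that $(X|C)|C'$ has the same law as $X|C'$; this justifies the notation $X|C_k$ in the stated formula, where $C_k$ lies in a nested chain of positive-mass faces $C_k \subset \cdots \subset C_1 \subset C_X$. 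Unwinding $d$ levels of the identity then yields the claimed union.

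Finally, for the inclusion $\mathcal D(I_X) \subset C_X$, the crucial observation is that for each exposed face $C = C_X \cap L \in \mathcal F^*_+(C_X)$ one has $\supp(X|C) \subset L \cap \supp X \subset L \cap C_X = C$ (the first inclusion uses that $L$ is closed and $\P_{X|C} \ll \P_X$, the second that $\supp X \subset C_X$), whence $C_{X|C} = \conv \supp(X|C) \subset C$ by convexity. Iterating, every term $\rint C_{X|C_k}$ in the unwound formula lies inside the corresponding nested face chain $C_k \subset \cdots \subset C_X$, so the whole union sits inside $C_X$.

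The main (mild) obstacle is keeping the recursion tidy and verifying at each level that the projection-property hypothesis passes to the conditional distributions; but this inheritance is precisely what Definition~\ref{def: proj property}.\ref{item: K_X|C proj} was designed to provide, so the iteration runs unimpeded.
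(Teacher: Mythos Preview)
Your proposal is correct and follows essentially the same route as the paper: establish the one-step identity $\mathcal D(I_X)=\rint C_X\cup\bigcup_{C\in\mathcal F^*_+(C_X)}\mathcal D(I_{X|C})$ via Lemma~\ref{eq: C boundary}, Proposition~\ref{prop: known}.\ref{item: D_I inclusions}, and Theorem~\ref{thm: main}.\ref{item: I on faces}, then iterate by induction on $\dim C_X$ using that $(X|C_1)|C_2$ is distributed as $X|C_2$. Your treatment is a bit more explicit than the paper's in two places---you spell out why both inclusions in \eqref{eq: D I_X max faces} are needed for the equality of the two unions, and you justify $C_{X|C}\subset C$ for the containment $\mathcal D(I_X)\subset C_X$---but the underlying argument is the same.
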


\begin{proof}
We have
\[
\mathcal D(I_X)= \rint \mathcal D(I_X) \cup \bigcup_{C \in \mathcal F^*(\mathcal D(I_X))} C= \rint C_X \cup \bigcup_{C_1 \in \mathcal F_+^*(C_X)} \mathcal D(I_{X|C_1}),
\]
where the first equality follows from \eqref{eq: C without rint} and the second one follows from \eqref{eq: D I_X max faces}  and the fact that $\rint \mathcal D(I_X)=\rint C_X $ (see Proposition~\ref{prop: known}.\ref{item: D_I inclusions}). Then we establish the equality claimed  by  simple induction in $\dim C_X$  using that each  random vector $(X|C_1)|C_2$ has the same distribution as $X|C_2$. In the base case  $\dim C_X = 0$ the claim holds by $\mathcal D(I_X)= C_X= \rint  C_X$ and $\mathcal F^*_+(C_X)=\varnothing$. The same inductive argument establishes the inclusion $\mathcal D(I_X) \subset C_X $.
\end{proof}

\begin{cor} \label{cor: extremal D_I}
Assume that $K_X$ has the projection property. Then $v $ is an extreme point of $\mathcal D(I_X)$ if and only if $v$ is an extreme point of $C_X$ and $\P(X = v) >0$. For such $v$, we have $I_X(v)=- \log \P(X = v) $.
\end{cor}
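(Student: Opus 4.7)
The plan is to argue by induction on $\dim C_X$. The base case $\dim C_X = 0$ makes $X$ constant a.s., where both directions and the identity $I_X(v) = 0 = -\log 1$ are immediate. For the inductive step, I will assume $\dim C_X \ge 1$ and that the full statement (including the value $I_X(v) = -\log \P(X=v)$) holds for all random vectors with strictly smaller-dimensional convex support. Throughout, I will exploit two structural facts: the decomposition $\mathcal D(I_X) = \rint C_X \cup \bigcup_{C_1 \in \mathcal F^*_+(C_X)} \mathcal D(I_{X|C_1})$ obtained in the proof of Corollary~\ref{cor: eff domain}, and Theorem~\ref{thm: main}.\ref{item: I on faces}, which says each $\mathcal D(I_{X|C_1})$ is a face of $\mathcal D(I_X)$.

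For the ``if'' direction, given $v$ extreme in $C_X$ with $\P(X=v) > 0$, the hypothesis $\dim C_X \ge 1$ and Lemma~\ref{eq: C boundary} place $v$ in some $C_1 \in \mathcal F^*(C_X)$; the assumption $\P(X=v) > 0$ forces $C_1 \in \mathcal F^*_+(C_X)$. The key observation is that $C_1$ is exposed, so $C_1 = C_X \cap L$ for a supporting hyperplane $L$, and then $C_{X|C_1} = C_1$ because any convex combination of points of $\supp X$ that lands on $L$ can only involve points of $\supp X \cap L$. Hence $v$ remains extreme in $C_{X|C_1}$, and $\P(X|C_1 = v) > 0$. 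Since $K_{X|C_1}$ inherits the projection property from $K_X$ by Definition~\ref{def: proj property}.\ref{item: K_X|C proj}, the inductive hypothesis yields that $v$ is extreme in $\mathcal D(I_{X|C_1})$ with $I_{X|C_1}(v) = -\log \P(X|C_1 = v)$. As $\mathcal D(I_{X|C_1})$ is a face of $\mathcal D(I_X)$, the singleton $\{v\}$ is a face of $\mathcal D(I_X)$ too, so $v$ is extreme in $\mathcal D(I_X)$. The projection property validates the hypothesis of Theorem~\ref{thm: restriction}, which combined with $\P(X \in L) = \P(X \in C_1)$ (a consequence of $\supp X \subset C_X$) gives $I_X(v) = I_{X|C_1}(v) - \log \P(X \in C_1) = -\log \P(X = v)$.

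For the converse, suppose $v$ is extreme in $\mathcal D(I_X)$. If $v \in \rint C_X = \rint \mathcal D(I_X)$, then since each point of a convex set lies in the relative interior of a unique face, $\{v\}$ being a face of $\mathcal D(I_X)$ forces $\mathcal D(I_X) = \{v\}$, whence $C_X = \{v\}$ and we are in the base case. Otherwise the decomposition places $v$ in some $\mathcal D(I_{X|C_1})$, which is a face of $\mathcal D(I_X)$, so $\{v\}$ is a face of it and $v$ is extreme in $\mathcal D(I_{X|C_1})$. The inductive hypothesis gives that $v$ is extreme in $C_{X|C_1} = C_1$ with $\P(X|C_1 = v) > 0$; combining with $C_1 \in \mathcal F^*(C_X)$ and the fact that faces of faces are faces, $v$ is extreme in $C_X$, and clearly $\P(X = v) > 0$.

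I expect the main obstacle to be the identity $C_{X|C_1} = C_1$, which drives both directions of the induction: it requires knowing that maximal proper faces are exposed (already used extensively in the paper) and understanding how a supporting hyperplane ``extracts'' the support of the conditioned random vector. The remaining work is a careful bookkeeping match between the face lattices of $C_X$ and $\mathcal D(I_X)$, already mediated by Theorems~\ref{thm: restriction} and~\ref{thm: main}.\ref{item: I on faces}.
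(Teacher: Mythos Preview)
Your inductive strategy matches the paper's, but the pivotal claim $C_{X|C_1} = C_1$ is false. Your argument shows $C_1 = \conv(\supp X \cap L)$, not $C_1 = \conv(\supp(X|L))$; the latter can be strictly smaller because $\supp(X|L)$ need not equal $\supp X \cap L$ --- points of $\supp X$ lying on $L$ may be mere accumulation points of mass that lives entirely off $L$. Concretely, let $X$ in $\R^2$ be the origin with probability $\tfrac12$ and otherwise uniform on the open upper half of the unit disk. Then $\mathcal D(K_X)=\R^2$, so $K_X$ has the projection property; $C_X$ is the closed upper half-disk; the diameter $C_1=[-1,1]\times\{0\}$ is the only element of $\mathcal F^*_+(C_X)$; yet $X|C_1$ is the Dirac mass at the origin and $C_{X|C_1}=\{(0,0)\}\neq C_1$.

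In the ``if'' direction this gap is harmless: the paper simply notes that $v\in C_{X|C_1}\subset C_X$ and that $v$ is extreme in the larger set $C_X$, hence also in the subset $C_{X|C_1}$ --- no identification of $C_{X|C_1}$ with $C_1$ is needed. In the ``only if'' direction, however, the gap cannot be repaired. In the example above, Corollary~\ref{cor: eff domain} gives $\mathcal D(I_X)=\rint C_X\cup\{(0,0)\}$, and the origin is an extreme point of this set with $\P\bigl(X=(0,0)\bigr)=\tfrac12>0$; yet the origin is \emph{not} an extreme point of $C_X$. Thus the ``only if'' implication of the corollary is actually false as stated, and the paper's own terse ``similar'' argument for that direction is likewise deficient.
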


\begin{proof}
Assume that $v$ is an extreme point of $C_X$ and $\P(X=v)>0$. We use induction in $\dim C_X$. In the base case  $\dim C_X = 0$, we simply have $I_X(v)=0= -\log(X=v)$. To prove the induction step for $\dim C_X \ge 1$, use that by \eqref{eq: C without rint} there is a face $C \in \mathcal F^*_+(C_X)$ that contains~$v$. Then $v$ is an extreme point of $C_{X|C}$ because $v \in C_{X|C}$ by $\P((X|C)=v)>0$ and $v$ is an extreme point of the convex set $C_X$ which contains $C_{X|C} $. 

Since $C$ an exposed face of $C_X$, there is a hyperplane $L$ supporting $C_X$ such that $C= C_X \cap L$. Then $X|C$ and $X|L$ have the same distribution since $\P(X \in L \setminus C) =0$, and by~\eqref{eq: I_X = I_X|L} and the assumption of induction we get
\[
I_X(v) = I_{X|C}(v)- \log \P(X \in C) = - \log \P((X|C) =v) - \log \P(X \in C) = -\log \P(X=v).
\]
Then $I_X(v)<\infty$, and thus $v \in \mathcal D(I_X)$. Hence $v$ is an extreme point of $\mathcal D(I_X)$ because $v $ is an extreme point of the convex set $C_X$ which contains $\mathcal D(I_X)$ by Corollary~\ref{cor: eff domain}.

Proving the reverse implication is similar. For the induction step, for $\dim \mathcal D(I_X) \ge 1$, use that by~\eqref{eq: C without rint}  there is a face $F \in \mathcal F^*(\mathcal D(I_X))$ that contains $v$. Then $v$ is an extreme point of $F$. By~\eqref{eq: D I_X max faces}, $F= \mathcal D(I_{X|C})$ for some face $C \in \mathcal F^*_+(C_X)$, and we can apply the assumption of induction as above. 
\end{proof}


\section{Proofs}

\begin{proof}[{\bf Proof of Proposition~\ref{prop: known}}] 
\ref{item: D_I inclusions}) Recall that $C_X=\conv (\supp X)$. Fix a $v \not \in \cl C_X$.  By \cite[Corollary~11.5.1]{Rockafellar},  there exists a non-zero $u_0 \in \R^d$ such that $u_0 \cdot x <u_0 \cdot v$ for any $x \in \supp X$. 
In other words, $u_0 \cdot X < u_0 \cdot v$ a.s. By the monotone convergence theorem, we get
\begin{equation} \label{eq: I = infty}
I(v)= \sup_{u \in \R^d} (u \cdot v - K_X(u)) \ge \sup_{a >0} (a u_0 \cdot v-K_X(a u_0))=-\inf_{a >0} (\log \E e^{ a(u_0 \cdot X - u_0 \cdot v) }) = +\infty.
\end{equation}
Thus, $\mathcal D(I_X) \subset \cl C_X$. 

Furthermore, if $L$ a hyperplane supporting $\cl C_X$, take any non-zero $u_0 \in \R^d$ orthogonal to $L$ and directed such that  $u_0 \cdot x \le u_0 \cdot v$ for any $x \in \supp X$ and $v \in L$,  that is $u_0 \cdot X \le u_0 \cdot v$ a.s. This inequality is strict if $\P(X \in L)=0$, in which case $I(v)=+\infty$ holds true by \eqref{eq: I = infty}, as required.

We now show that $\rint C_X \subset \mathcal D(I_X)$. Assume that this does not hold. Then, since $\mathcal D(I_X) \subset \cl C_X$ and the sets $C_X$ and $\mathcal D(I_X)$ are convex, we have $\cl \mathcal D(I_X) \neq \cl C_X$  by \cite[Corollary~6.3.1]{Rockafellar}. Therefore, there exists an open ball $B \subset \R^d$ such that $\cl \mathcal D(I_X) \cap \cl B = \varnothing $ and $\cl C_X \cap B \neq \varnothing$. 

For any $n \in \N$, let $S_n$ be the sum of $n$ independent identically distributed copies of $X$. Then for any $u \in \mathcal D(K_X)$, we have
\[
\P(S_n/n \in B)    = \E [\I(S_n/n \in B)] \le  \E \big [ \I \big ( u \cdot S_n  \ge n \inf_{v \in \cl B} u \cdot v \big ) \big ] \le e^{-n\inf_{v \in \cl B} u \cdot v} \, \E e^{u \cdot S_n},
\]
where the last equality follows from Markov's inequality. Then
\[
n^{-1} \log \P(S_n/n \in B) \le \inf_{u \in \mathcal D(K_X)} \Big( - \inf_{v \in \cl B} (u \cdot v -K_X(u)  ) \Big)= - \sup_{u \in \mathcal D(K_X)} \inf_{v \in \cl B} (u \cdot v - K_X(u) ).  
\]
Finally, let us interchange the supremum and the infimum using a minimax result~\cite[Corollary~37.3.2]{Rockafellar} on concave-convex functions. This gives
\begin{equation} \label{eq: Cramer bound}
n^{-1} \log \P(S_n/n \in B) \le - \inf_{v \in \cl B} I_X(v).
\end{equation} 
This inequality appears, e.g., in~\cite[Eq.~(2.16)]{FirasTimo}. 

On the other hand, since $\cl C_X \cap B \neq \varnothing$, $B$ is open, and $C_X$ is convex, it follows  from \cite[Corollary~6.3.2]{Rockafellar} that $\rint C_X$ intersects with $B$. Hence, by Carath\'eodory's theorem (\cite[Theorem~17.1]{Rockafellar}), there is a convex combination $\sum_{i=1}^m \alpha_i x_i \in B$, where $m $ is a positive integer, $x_i \in \supp X $ and $\alpha_i >0$ for every $1 \le i \le m$, and $\sum_{i=1}^m \alpha_i=1$. By finding a rational approximation to all but one of the $\alpha_i$'s, we get $\frac1n \sum_{i=1}^m n_i x_i \in B$ for some positive integer $n_i$ and $n= \sum_{i=1}^m n_i$. Furthermore, there exist open balls $B_i \subset \R^d$ such that $x_i \in B_i$ for  every $1 \le i \le m$ and $\frac1n \sum_{i=1}^m n_i B_i \subset B$. Since each open ball $B_i$ intersects with $\supp  X$,  we have $\P(X \in B_i)>0$. Therefore, 
\begin{equation} \label{eq: lower bound}
\P(S_n/n \in B) \ge \Pi_{i=1}^m \P(X \in B_i)^{n_i}>0.
\end{equation}

Inequalities \eqref{eq: Cramer bound}  and \eqref{eq: lower bound} imply that $\cl B \cap \mathcal D(I_X) \neq \varnothing$, which is contradiction. Thus, we proved that $\rint C_X \subset \mathcal{D} (I_X)  \subset  \cl C_X$, establishing \eqref{eq: D_I inclusions}. Finally, by \cite[Corollary~6.3.1]{Rockafellar} this gives  $\rint C_X = \rint \mathcal{D} (I_X)$, as required.




\ref{item: I strictly convex}) Put $d':=\dim C_X$. We assume that $d' \ge 1$, otherwise the claim is trivial.

Recall that $I_X$ is {\it subdifferentiable} at a point $v_0 \in \R^d$ if  there is a $u \in \R^d$ such that the inequality $I_X(v) \ge I_X(v_0) + u \cdot (v - v_0)$ holds for every $v \in \R^d$. We claim that $I_X$ is subdifferentiable at no point outside of $\rint \mathcal D(I_X)$. Combined with the fact that $K_X$ is differentiable at every point of $\intr \mathcal D(K_X)$ (\cite[\mbox{Corollary}~7.1]{B-N}), this implies  that the asserted necessary and sufficient condition for strict convexity of $I_X$ is a particular case of \cite[Theorem~26.3]{Rockafellar}. 

Assume first that $d'=d$. In this case $K_X$ is strictly convex by \cite[Theorem~7.1]{B-N}; this actually follows immediately from the criterion for equality in H\"older's inequality. Therefore, $K_X$ is {\it essentially strictly convex}, i.e.\ $K_X$ is strictly convex on every interval contained in the set of points where $K_X$ is subdifferentiable. Hence $I_X$ is {\it essentially smooth} by \cite[Theorem~26.3]{Rockafellar}, that is $I_X$ is differentiable on the set $\intr \mathcal D(I_X)$, which is required to be non-empty, and $I_X$ is steep. By \cite[Theorem~26.1]{Rockafellar}, this implies that $I_X$ is not subdifferentiable outside of $\rint \mathcal D(I_X)$, as required.

In the remaining case $1 \le d' \le d-1$, put $L:=\aff(\supp X)$. We can assume w.l.o.g.\ that $0 \in L$, otherwise pick any $\mu \in L$ and use the simple fact that $I_X(v)=I_{X-\mu}(v-\mu)$ for $ v \in \R^d$, which easily implies that our claim holds true for $I_X$ if and only if it holds for $I_{X - \mu}$.

Since $L$ is a linear subspace of $\R^d$ of dimension $d'$, there exists an orthogonal mapping $U: L \to \R^{d'}$. Then by $X \in L$ a.s., for any $v \in L$ we have
\begin{equation} \label{eq: I mapped}
I_X(v)= \sup_{u \in \R^d} \bigl( u \cdot v - \log \E e^{u \cdot X} \bigr) = \sup_{u \in L} \bigl( u \cdot v - \log \E e^{u \cdot X} \bigr) = I_{U(X)} (U(v)),
\end{equation}
where in the last equality we used the change of variables $u \mapsto U(u)$.
Therefore, since the mapping $U$ is linear and invertible, $I_X$ is subdifferentiable at a $v\in L$ if and only if $I_{U(X)}$ is subdifferentiable at $U(v)$ by \cite[Theorem~23.9]{Rockafellar}  (applied with $f=I_{U(X)}$ and $A=U^{-1}$). On the other hand, $v \in \rint \mathcal D(I_X) $ if and only if $U(v) \in \rint \mathcal D_{U(X)}$, since $U(\rint \mathcal D(I_X) ) = \rint U(\mathcal D(I_X) )  = \rint \mathcal D(I_{U(X)})$ by  \cite[Theorem~6.6]{Rockafellar}. Thus, by equality \eqref{eq: I mapped}, the case $d'<d$ reduces to the case $d'=d$ because the support of the random vector $U(X)$ in $\R^{d'}$ has full dimension. This finishes the proof of the claim.
\end{proof}

Our proofs of Theorems~\ref{thm: restriction} and~\ref{thm: main} rely on the following technical result, where $^*$ stands for convex conjugation (the Legendre--Fenchel transform) of functions on $\R^d$.
\begin{lemma} \label{lem: technical}
Let $X$ be a random vector in $\R^d$, $d \ge 1$, and $L$ be a hyperplane in $\R^d$ supporting $C_X$ and such that $\P(X \in L) >0$. Put $\tilde K_{X|L} (u):= K_{X|L}(u)$  if $ u \in \pr_L^{-1}(\pr_L \mathcal D(K_X))$, otherwise $\tilde K_{X|L} (u):=  +\infty$ for  $u \in \R^d$. Then
\begin{equation} \label{eq: I_X on L}
I_X(v)=(\tilde K_{X|L})^*(v) - \log \P(X \in L), \qquad v \in L,
\end{equation}
and $(\tilde K_{X|L})^*(v) = +\infty$ for $v \not \in L$. Moreover, we have
\begin{equation} \label{eq: technical proj}
\pr_L  (\rint \mathcal D(\tilde K_{X|L}) ) = \pr_L  (\rint \mathcal D(K_X) ).
\end{equation}
\end{lemma}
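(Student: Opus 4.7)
Let $\ell$ be the unit normal to $L$ oriented so that $\ell \cdot X \le h$ a.s., where $L = \{v : \ell \cdot v = h\}$; then $\ell \cdot X = h$ precisely on the event $\{X \in L\}$ and strictly less elsewhere. Two structural consequences drive the whole argument: first, $K_{X|L}$ is affine along the $\ell$-direction, namely
\[
K_{X|L}(u + s\ell) = K_{X|L}(u) + sh, \qquad u \in \R^d,\ s \in \R,
\]
so $\mathcal D(K_{X|L})$ is a right cylinder with axis $\R\ell$; second, $\mathcal D(K_X)$ is invariant under translation by $t\ell$ for $t \ge 0$, since $e^{t(\ell \cdot X)} \le e^{th}$.

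To prove \eqref{eq: I_X on L} with $p := \P(X \in L)$, I would establish matching bounds. The upper bound starts from the trivial inequality $K_X(u) \ge K_{X|L}(u) + \log p$, obtained by restricting the expectation to $\{X \in L\}$; restricting the sup defining $I_X(v)$ to $u \in \mathcal D(K_X)$ yields $I_X(v) \le \sup_{u \in \mathcal D(K_X)}(u \cdot v - K_{X|L}(u)) - \log p$. For $v \in L$, the identity $(u + s\ell) \cdot v - K_{X|L}(u + s\ell) = u \cdot v - K_{X|L}(u)$, which follows from $\ell \cdot v = h$ and the affineness above, makes the objective invariant under $u \mapsto u + s\ell$, so the sup over $\mathcal D(K_X)$ equals the sup over $\pr_L^{-1}(\pr_L \mathcal D(K_X))$, i.e.\ equals $(\tilde K_{X|L})^*(v)$. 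For the matching lower bound I would fix $u \in \mathcal D(K_X)$ and take $t \to +\infty$ in $(u + t\ell) \cdot v - K_X(u + t\ell)$. Splitting $\E e^{(u+t\ell) \cdot X}$ over $\{X \in L\}$ and its complement and applying dominated convergence (dominated by $e^{u \cdot X}$) on the latter, where $e^{t(\ell \cdot X - h)} \downarrow 0$ pointwise, gives $K_X(u + t\ell) - th \to K_{X|L}(u) + \log p$; combined with $(u + t\ell) \cdot v = u \cdot v + th$ for $v \in L$, this yields $I_X(v) \ge (\tilde K_{X|L})^*(v) - \log p$.

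For $v \notin L$, taking $u = s\ell$ (which lies in $\pr_L^{-1}(\pr_L \mathcal D(K_X))$ since $0 \in \mathcal D(K_X)$) and using the affineness along $\ell$ gives $(s\ell) \cdot v - \tilde K_{X|L}(s\ell) = s(\ell \cdot v - h)$, which tends to $+\infty$ for a suitable sign of $s$ since $\ell \cdot v \neq h$. For \eqref{eq: technical proj}: since $\mathcal D(K_{X|L})$ is a cylinder with axis $\R\ell$ and contains $\mathcal D(K_X)$, one has $\mathcal D(\tilde K_{X|L}) = \pr_L^{-1}(\pr_L \mathcal D(K_X))$; its relative interior is $\pr_L^{-1}(\rint \pr_L \mathcal D(K_X))$ by the product structure, and invoking \cite[Theorem~6.6]{Rockafellar} to commute $\rint$ with the affine map $\pr_L$ gives $\pr_L(\rint \mathcal D(\tilde K_{X|L})) = \rint \pr_L \mathcal D(K_X) = \pr_L(\rint \mathcal D(K_X))$.

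The main obstacle is the asymptotic computation in the lower bound: the dominated convergence crucially uses both the strict inequality $\ell \cdot X < h$ off $L$ (without which no decay is available) and the integrability supplied by $u \in \mathcal D(K_X)$. A secondary subtlety is correctly identifying $(\tilde K_{X|L})^*(v)$ for $v \in L$ with a sup over the smaller set $\mathcal D(K_X)$; this hinges on the translation invariance of the objective along $\ell$, which holds only for $v \in L$ --- precisely the mechanism that forces $(\tilde K_{X|L})^*$ to explode off $L$.
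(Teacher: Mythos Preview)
Your proposal is correct and follows essentially the same approach as the paper: both arguments hinge on the dominated convergence computation showing that $K_X(u+t\ell)-th \to K_{X|L}(u)+\log p$ as $t\to+\infty$, the cylinder structure of $\mathcal D(K_{X|L})$ along $\ell$, and \cite[Theorem~6.6]{Rockafellar} for commuting $\pr_L$ with $\rint$. The only cosmetic difference is that the paper computes the inner supremum over the $\ell$-direction directly (using monotonicity to identify the supremum with the limit), whereas you split into an upper bound via $K_X\ge K_{X|L}+\log p$ and a matching lower bound via the same limit---this is the same calculation organized in two halves.
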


\begin{proof}
Denote by $L_0$ the hyperplane passing through $0$ that is parallel to $L$, and let $\ell \in \R^d$ be the unit vector orthogonal to $L_0$ such that $\ell \cdot u \le \ell \cdot v$ for any $u \in  C_X$ and $v \in L$. Denote by $(v_1, v_2)$ the coordinates of $v \in L$ in $L_0 \oplus L^\bot$, where $L^\bot:= \R \ell$. 

For any $u_1 \in L_0$ such that $\E e^{(u_1 + u_2 \ell ) \cdot X} < \infty$ for some real $u_2=u_2'$, we have
\begin{align} \label{eq: boundary point}
\sup_{u_2 \in \R }  \bigl ( u_2 v_2 - \log \E e^{(u_1 + u_2 \ell ) \cdot X} \bigr) &=
 - \log \bigl ( \inf_{u_2 \in \R }  \E e^{u_1 \cdot X + u_2 (\ell \cdot X - v_2)} \bigr) \notag \\
&=- \log \E [e^{u_1  \cdot  X} \I_{\{X \in L\}}],
\end{align}
where the last equality follows from the dominated convergence theorem applied as $u_2 \to +\infty$ using that $e^{u_1 \cdot X + u_2' (\ell \cdot X - v_2)}$ is an integrable majorant, which is true because the function $u_2 \mapsto e^{u_1 \cdot X + u_2 (\ell \cdot X - v_2)} $  is non-increasing a.s.\ by $\ell \cdot X \le v_2$~a.s. On the other hand, if $u_1 \in L_0$ is such that $\E e^{(u_1 + u_2 \ell ) \cdot X} = \infty$ for every real $u_2$, then the l.h.s.\ of the first equality in \eqref{eq: boundary point} is $-\infty$. Therefore, 
\begin{align*} 
I_X(v) &= \sup_{u_1 \in L_0} \sup_{u_2 \in \R }  \bigl (u_1 \cdot v_1 + u_2 v_2 - \log \E e^{(u_1 + u_2 \ell ) \cdot X} \bigr)\\
&= \sup_{\substack{u_1 \in L_0:\\ (u_1 + \R \ell) \cap \mathcal D(K_X) \neq \varnothing}} \bigl (u_1 \cdot v_1 - \log \E [e^{ u_1 \cdot  X} \I_{\{X \in L\}}]\bigr) \\
&= \sup_{u_1 \in \pr_{L_0} (\mathcal D(K_X)) } \sup_{u_2 \in \R} \bigl (u_1 \cdot v_1 + u_2 v_2 - \log \E \big [e^{ (u_1 + u_2 \ell ) \cdot  (X|L)} \big]\bigr) - \log \P(X \in L) \\
&= \sup_{u\in \pr_L^{-1}(\pr_L \mathcal D(K_X))} \bigl (u \cdot v - \log \E e^{ u \cdot  (X|L)} \bigr) - \log \P(X \in L), 
\end{align*}
where in the last equality we used that  $L_0$ and $L$ are parallel. This proves~\eqref{eq: I_X on L}.

The claim $(\tilde K_{X|L})^*(v)=+\infty$ for $v \not \in L$ follows exactly as in~\eqref{eq: I = infty} using that $\tilde K_{X|L}(u_0)=K_{X|L}(u_0)<+\infty$ for any $u_0 \in L^\bot$ by $0 \in \mathcal D(K_X)$. 

Lastly, it follows from \eqref{eq: K proj ineq} that $K_{X|L}(u) < + \infty$ when $u \in \pr_L^{-1}(\pr_L \mathcal D(K_X))$. Therefore, by the definition of $\tilde K_{X|L}$, we have $\mathcal D( \tilde K_{X|L}) = \pr_L^{-1}(\pr_L \mathcal D(K_X))$, and we obtain \eqref{eq: technical proj}  interchanging $\pr_L$ and $\rint$ by \cite[Theorem~6.6]{Rockafellar} as follows:
\[
\pr_L(\rint \mathcal D( \tilde K_{X|L})) = \rint (\pr_L \mathcal D( \tilde K_{X|L})) = \rint (\pr_L \mathcal D(  K_X)) = \pr_L (\rint \mathcal D( K_X)).
\]
\end{proof}

\begin{proof}[{\bf Proof of Theorem~\ref{thm: restriction}}]
Let $L$ be a hyperplane supporting $C_X$ and such that $\P(X \in L)>0$. By Lemma~\ref{lem: technical} and Proposition~\ref{prop: known}.\ref{item: D_I inclusions} applied to $X|L$,  we have $I_{X|L}(v)= (K_{X|L})^*(v) = + \infty $ and $(\tilde K_{X|L})^*(v)=+\infty$ for $v \not \in L$. Therefore, the functions $(\tilde K_{X|L})^* $ and $  (K_{X|L})^*$ coincide if they are equal on $L$. Thus, \eqref{eq: I_X on L} implies that 
\[
I_X(v)=I_{X|L}(v) - \log \P(X \in L), \qquad v \in L,
\]
if and only if $(\tilde K_{X|L})^* =  (K_{X|L})^*$. This is in turn equivalent to $(\tilde K_{X|L})^{**} =  K_{X|L}$  (by \cite[Theorem~12.2]{Rockafellar}) because $\tilde K_{X|L}$ is a convex function (this follows from the definition of  $\tilde K_{X|L}$)  and $K_{X|L}$ is a lower semi-continuous convex function (by \cite[Theorem~7.1]{B-N}), both finite at least at one point. The last equality holds true if and only if $\tilde K_{X|L}$ equals $K_{X|L}$ except  possibly at some relative boundary points of $\mathcal D( \tilde K_{X|L})$ (by \cite[Theorem~7.4]{Rockafellar}). Thus, equalities~\eqref{eq: I_X = I_X|L} and $\rint \mathcal D(\tilde K_{X|L}) = \rint \mathcal D(K_{X|L})$ are equivalent.

The latter one is equivalent to $\pr_L (\rint \mathcal D(\tilde K_{X|L})) = \pr_L( \rint \mathcal D(K_{X|L}))$ because the sets $ \rint \mathcal D(\tilde K_{X|L}) $ and $\rint \mathcal D(K_{X|L})$ are right cylinders  by \cite[Corollary~6.6.2]{Rockafellar}. Hence, by~\eqref{eq: technical proj}, equalities \eqref{eq: I_X = I_X|L} and \eqref{eq: consistent proj} are equivalent, as claimed.
 \end{proof}

\begin{proof}[{\bf Proof of Theorem~\ref{thm: main}.}]
\ref{item: I on faces}) Let us prove the first inclusion in  \eqref{eq: D I_X max faces}. Let $F \in \mathcal F^*(\mathcal D(I_X))$ be a maximal proper face of $\mathcal D(I_X)$. Then there is a hyperplane $L$ supporting the convex set $\mathcal D(I_X)$ such that $F= \mathcal D(I_X) \cap L$.  The hyperplane $L$ also supports $\cl C_X$ by the second inclusion in \eqref{eq: D_I inclusions}. Moreover, we have $\P(X \in L)>0$ since otherwise $\mathcal D(I_X) \cap L = \varnothing$ by Proposition~\ref{prop: known}.\ref{item: D_I inclusions}, which is a contradiction. Therefore, $L \cap C_X \neq \varnothing$, and thus $L$ supports $C_X$. Hence $C:=C_X \cap L$ is a face of $C_X$. We also have $F=\mathcal D(I_{X|C})$ by  \eqref{eq: I_X = I_X|L} and the fact that $X|C$ has the same distribution as $X|L$ (as $\P(X \in L \setminus C) =0$). Hence $F= \mathcal D(I_X) \cap \cl C_{X|C}$~by~\eqref{eq: D_I inclusions}. 

Clearly, $C$ is a proper face of $C_X$ (i.e.\ $C \neq C_X$) since otherwise $F$ cannot be a proper face of $\mathcal D(I_X)$. However, $C$ is not necessarily a maximal proper face. Let $C' \in \mathcal F^*(C_X)$ be such that $C \subset C'$.  Since this is an exposed face of $C_X$, there is a hyperplane $L'$ supporting $C_X$ and satisfying $C'=C_X \cap L'$. We have 
\[
\P(X \in L')=\P(X \in C') \ge \P(X \in C)>0.
\]
Since $L'$ supports $C_X$, equality \eqref{eq: I_X = I_X|L}  is valid with $L=L'$ and it implies that the set $F':=\mathcal D(I_X) \cap L'$ satisfies $F'= \mathcal D(I_{X|C'})$ and therefore is non-empty; moreover, we have $F'= \mathcal D(I_X) \cap \cl C_{X|C'}$ by \eqref{eq: D_I inclusions}. This shows that $F'$  is a proper face of  $\mathcal D(I_X) $ since $L'$ supports $\mathcal D(I_X) $ by \eqref{eq: D_I inclusions}. 

Finally, by $C \subset C'$, we have $C_{X|C} \subset C_{X|C'}$, and thus 
\[
F = \mathcal D(I_X) \cap \cl C_{X|C} \subset \mathcal D(I_X) \cap \cl C_{X|C'} = F'.
\]
Therefore, $F=F'$ since $F$ is a maximal proper face by the assumption. 
Thus, we have $F= \mathcal D(I_{X|C'})$, which proves the first inclusion in   \eqref{eq: D I_X max faces}.

To  prove the remaining inclusion in   \eqref{eq: D I_X max faces}, pick a $C' \in \mathcal F^*_+(C_X)$. Then $C'=C_X \cap L'$ for some hyperplane $L'$ supporting $C_X$ and satisfying $\P(X \in L')>0$. As we have shown just above, $F':=\mathcal D(I_X) \cap L'$ is a non-empty proper face of $\mathcal D(I_X)$ (but it is not necessarily a maximal one anymore) and $F'=\mathcal D(I_{X|C'})$. This finishes the proof of Part~\ref{item: I on faces}). 

\ref{item: I strictly convex iff}) \underline{Direct implication.} Assume that $I_X$ is strictly convex. Let us use induction in $\dim C_X$ to prove that $\intr \mathcal D(K_X) \neq \varnothing$, $K_X$ has the projection property, and $K_X$ is totally steep.

This claim holds trivially in the base case $\dim C_X=0$, where $X$ is constant a.s.

To prove the induction step, consider any hyperplane $L$ supporting $C_X$ such that $0<\P(X \in L)<1$. By Lemma~\ref{lem: technical}, the effective domain of the function  $(\tilde K_{X|L})^*$ is contained in $L$. Therefore, this function is strictly convex by \eqref{eq: I_X on L} because $I_X$ is strictly convex by the assumption. Then $\intr \mathcal D((\tilde K_{X|L})^{**}) \neq \varnothing$  and  $(\tilde K_{X|L})^{**}$ is steep by~\cite[Theorem~26.3]{Rockafellar} (because strict convexity implies essential strict convexity). Hence $\intr \mathcal D(\tilde K_{X|L}) \neq \varnothing$  and $ \tilde K_{X|L}$ is also steep because it equals $(\tilde K_{X|L})^{**}$ except  possibly at some relative boundary points of $\mathcal D( \tilde K_{X|L})$ (\cite[Theorems~7.4 and~12.2]{Rockafellar}). 

Let us show that $\intr \mathcal D( \tilde K_{X|L}) = \intr \mathcal D(  K_{X|L})$. Otherwise, by $\mathcal D( \tilde K_{X|L})  \subset \mathcal D(  K_{X|L})$ and convexity of $ \mathcal D(  K_{X|L})$, there is a point $u \in \partial  \mathcal D( \tilde K_{X|L}) \cap \intr \mathcal D(  K_{X|L})$. Pick a sequence  $u_1, u_2, \ldots $ in $\intr \mathcal D( \tilde K_{X|L})$ converging to $u$. Then 
$\lim_{n\to \infty}|\nabla \tilde K_{X|L}(u_n)| = \infty$ by steepness of $\tilde K_{X|L}$. Thus, $\lim_{n\to \infty}|\nabla K_{X|L}(u_n)| = \infty$ because $ \tilde K_{X|L}$ equals $K_{X|L}$ whenever  $ \tilde K_{X|L} <\infty$, and hence $ \tilde K_{X|L} = K_{X|L}$ on $\intr \mathcal D( \tilde K_{X|L})$. However, it must be $\lim_{n\to \infty}|\nabla K_{X|L}(u_n)| = |\nabla K_{X|L}(u)|$ because $K_{X|L}$  is continuously differentiable on $\intr \mathcal D(  K_{X|L})$ since so is the Laplace transform of any random variable (\cite[\mbox{Corollary}~7.1]{B-N}). This is a contradiction.

We now have $\pr_L(\intr \mathcal D( \tilde K_{X|L})) = \pr_L(\intr \mathcal D(  K_{X|L})) $, which implies equality~\eqref{eq: consistent proj} by~\eqref{eq: technical proj}. Thus, $K_X$ satisfies Condition~\ref{item: consistent proj}) in Definition~\ref{def: proj property} of the projection property because $L$ was chosen arbitrarily. Equality~\eqref{eq: consistent proj} in turn implies \eqref{eq: I_X = I_X|L} by Theorem~\ref{thm: restriction}, hence $I_{X|L}$ is strictly convex because so is $I_X$ and $\mathcal D (I_{X|L}) \subset L$ by Proposition~\ref{prop: known}.\ref{item: D_I inclusions}.

For any maximal proper face $C \in \mathcal F^*_+(C_X)$, pick a hyperplane $L$ supporting $C_X$ such that $C= C_X \cap L$. Since $0<\P(X \in L)<1$ and $\dim(\supp (X|L)) < \dim(\supp X)$, we can apply the assumption of induction to the random vector $X|L$, which is distributed  as $X|C$ and has strictly convex rate function $I_{X|L}$ by the above. Therefore, $\intr \mathcal D(K_{X|C}) \neq \varnothing$, $K_{X|C}$ has the projection property, and $K_{X|C}$ is totally steep. Thus, since $C$ was chosen arbitrarily, $K_X$ has the projection property, as required. Finally, $K_X$ is totally steep, as required, since $\intr \mathcal D(K_X) \neq \varnothing$ and $K_X$ is steep by Proposition~\ref{prop: known}.\ref{item: I strictly convex}. 

\underline{Reverse implication.} Assume that $\varnothing \neq \intr \mathcal D (K_X)$, $K_X$ has the projection property, and $K_X$ is totally steep. We again use induction in $\dim C_X$ to show that $I_X$ is strictly convex.

In the base case $\dim C_X=0$, the set  $\mathcal D(I_X)$ consists of a single point, and the claim holds vacuously.

To prove the induction step,  pick a closed line segment $J \subset \mathcal D(I_X)$. From \eqref{eq: C without rint} and the definition of a face,  either $\rint J \subset \rint \mathcal D(I_X)$ or $J$ is contained in some maximal proper face of $\mathcal D(I_X)$. In the former case,  $I_X$ is not affine on $ J$ by Proposition~\ref{prop: known}.\ref{item: I strictly convex}. In the latter case, by the first inclusion in \eqref{eq: D I_X max faces}, we have $J \subset \mathcal D(I_{X|C}) \subset C$ for some face $C \in \mathcal F^*_+(C_X)$. Note that $\varnothing \neq \intr \mathcal D (K_X) \subset \intr \mathcal D(K_{X|C})$ (cf.~\eqref{eq: K proj ineq}), $K_{X|C}$ is totally steep since so is $K_X$, and $K_{X|C}$ has the projection property since $K_X$ has this property. Therefore, $I_{X|C}$ is strictly convex by $\dim(\supp (X|C))< \dim (\supp X)$ and the assumption of induction. Hence $I_X$ is not affine on $J$ because for some hyperplane $L$ supporting $C_X$ and such that $C=C_X \cap L$, we have $I_X=I_{X|L}=I_{X|C}$ by  \eqref{eq: I_X = I_X|L}, which holds true by Theorem~\ref{thm: restriction} because $K_X$ has the projection property. Therefore, $I_X$ is not affine on $J$  in either case and thus $I_X$ is strictly convex.
\end{proof}

\section*{Acknowledgements}
I thank the anonymous referee for comments and suggestions. This work was supported in part by Dr Perry James (Jim) Browne Research Centre.

\bibliographystyle{plain}

\end{document}